\pgfplotsset{compat=newest}
\newclass{\MSO}{MSO}
\newtheorem{theorem}{Theorem}[section]
\newtheorem*{theorem*}{Theorem}
\newtheorem*{corollary*}{Corollary}
\newtheorem{lemma}[theorem]{Lemma}  
\newtheorem{corollary}[theorem]{Corollary}  
\theoremstyle{definition}
\newtheorem{example}[theorem]{Example}  
\newtheorem{definition}[theorem]{Definition}  
\newtheorem{notation}[theorem]{Notation}  
\newcommand{\quo}[1]{\text{``}#1\text{''}}
\newcommand{\commen}[1]{}
\DeclareMathOperator{\moda}{mod}
\DeclareMathOperator{\lcm}{lcm}
\DeclareMathOperator{\sect}{sec}
\newcommand{\fracInline}[2]{#1/#2}
\newcommand{\mucF}[1]{\mu_{#1}}%dim 1, vrai si R est presb
\newcommand{\minTu}[2]{\min_{#1}\left\{#2\right\}}
\newcommand{\N}{\ensuremath{\mathbb N}{}}
\newcommand{\Z}{\ensuremath{\mathbb Z}{}}
\newcommand{\shift}[4]{pair $(#2,#3)$ can by shifted by $#4$ in $#1$}%1Set, 2position 3taille, 4 vecteur
\newcommand{\shiftable}[4]{pair $(#2,#3)$ is $#4$-shiftable in $#1$}%1Set, 2position 3taille, 4 distance
\newcommand{\notShiftable}[4]{pair $(#2,#3)$ is not $#4$-shiftable in $#1$}%1Set, 2position 3taille, 4 distance
\newcommand{\set}[1]{\left\{#1\right\}}
\newcommand{\cube}[3]{C_{#3}\left(#1,#2\right)}%Le voisinage de 1 à
\newcommand{\ite}[3]{\ensuremath{\left\langle{#1}\mid{#2}\mid{#3}\right\rangle}}
\newcommand{\secti}[3]{\sect(#1; x_{#2}=#3)}% section de 1
\newcommand{\norm}[1]{\ensuremath{\|#1\|}}
\newcommand{\abs}[1]{|{#1}|}% Valeur absolu d'un réel
\newcommand{\mc}{\mathcal}
\newcommand{\tu}{\boldsymbol}
\newcommand{\statement}[2]{\begin{equation}\label{#1}\mbox{\parbox{0.85\textwidth}{#2}}\end{equation}}
\newcommand{\structure}[1]{\mc{#1}}
\newcommand{\vocabulary}[1]{\mc{#1}}
\newcommand{\s}{\structure{S}}
\newcommand{\V}{\vocabulary{V}}
\newcommand{\fo}[1]{\FO\left[#1\right]}%Formule du premier ordre sur
\begin{document}
\title{Uniform definition of sets using relations and complement of
  Presburger Arithmetic.}  \author{Arthur Milchior}% \rhtitle{\tit}

% \title{\tit}
% \date{}
\affil{  %     \begin{multicols}{2}
      %   IRIF, Université Paris 7 - Denis Diderot, France\\
      %   CNRS UMR 8243, Université Paris Diderot - Paris 7, Case 7014\\
      %   75205 Paris Cedex 13\\\columnbreak
        Université Paris-Est Créteil, \\LACL (EA 4219), UPEC,\\
        F-94010 Créteil, France\\
      % \end{multicols}
        {\tt Arthur.Milchior@u-pec.fr}\\
      %  {\tt
      %    \url{http://www.liafa.univ-paris-diderot.fr/web9/equiprech/fichepers_fr.php?id=353}}
     }
     \maketitle{}
\begin{abstract}
  In 1996, Michaux and Villemaire considered integer relations $R$
  which are not definable in Presburger Arithmetic. That is, not
  definable in first-order logic over integers with the addition
  function and the order relation ($\fo{\N,+,<}$-definable
  relations). They proved that, for each such $R$, there exists a
  $\fo{\N,+,<,R}$-formula $\nu_{R}(x)$ which defines a set of integers
  which is not ultimately periodic, i.e. not $\fo{\N,+,<}$-definable.

  It is proven in this paper that the formula $\nu(x)$ can be chosen
  such that it does not depend on the interpretation of $R$. It is
  furthermore proven that $\nu(x)$ can be chosen such that it defines
  an expanding set. That is, an infinite set of integers such that the
  distance between two successive elements is not bounded.
\end{abstract}
% \begin{keywords}
%   First order logic, Presburger arithmetic, definability
% \end{keywords}

\section{Introduction}
This paper deals with first order logic over non-negative integers
with the addition function and the order relation. This logic is
denoted $\fo{\N,+,<}$ and is also called Presburger Arithmetic.  A few
properties of Presburger arithmetic are now recalled.

By \cite{Presburger}, the logic $\fo{\N,+,<}$ admits the elimination
of quantifiers. In particular it implies that $\fo{\N,+,<}$ is a
decidable theory. It is known that sets definable in Presburger
Arithmetic coincide with semilinear sets \cite{ginsburg}, that is,
finite union of linear sets.  A linear set of dimension $d\in\N$ is a
set of the form
$\set{(n^{0}_{0},\dots,n^{0}_{d-1})+\sum_{i=1}^{c}m^{i}(n_{0}^{i},\dots,n_{d-1}^{i})\tu
  \mid m^{1},\dots,m^{c}\in\N}$,
where $c\in\N$ and $n_{j}^{i}\in\N$ for $i\in[c]$ and $j\in[d-1]$.
More precisely, the $\fo{\N,+,<}$-definable sets are exactly the
finite union of disjoint linear sets
\cite{Ito:1969:SSF:1739943.1740133}.  For the particular case of
$d=1$, a set $R$ of integers is $\fo{\N,+,<}$-definable if and only if
it is ultimately periodic sets. That is, if there exists a threshold
$t$ and a period $p>0$ such that, for all $n>t$, $n\in R$ if and only
if $n+p\in R$. 

A characterization of $\fo{\N,+,<}$-definable sets of dimension $d$ is
given in \cite[Theorem 2]{muchnik} in terms of sets of dimension $d-1$
and in terms of local properties. Another characterization of
$\fo{\N,+,<}$-definable sets is given in \cite[Theorem
5.1]{michaux}. This characterization states that for any $d\in\N$, if
a set $R\subseteq\N^{d}$ is not $\fo{\N,+,<}$-definable, then there
exists a set $S$ of integers which is $\fo{\N,+,<,R}$-definable and which is
not $\fo{\N,+,<}$-definable. Furthermore, it can be assumed that $S$
is expanding, that is, that the difference between two successive
elements of $S$ is unbounded.
\paragraph{}
The result \cite[Theorem 5.1]{michaux} is particularly useful to
reduce the complexity of proofs. Instead of assuming that a relation
over integers is not $\fo{\N,+,<}$-definable, it is possible to assume
that a set of integers is $\fo{\N,+,<}$-definable.

In particular, Theorem \cite[Theorem 5.1]{michaux} serves to prove
Cobham's \cite{Cobham} and Semenov's \cite{semenov-theorem} Theorem
(Cobham's Theorem being the case $d=1$ of Semenov's
Theorem). Semenov's Theorem is: \quo{Let $k$ and $l$ be
  multiplicatively independant (i.e. have no common power apart from
  1). If $R\subseteq\N^{d}$ is definable in $\fo{\N,+,<,V_{k}}$ and in
  $\fo{\N,+,<,V_{l}}$ then $R$ is $\fo{\N,+,<}$-definable.}  Here
$V_{m}$ is the function which maps every nonzero natural number to the
greatest power of $m$ dividing it.

\paragraph{}
The \quo{central idea} of \cite[Theorem 5.1]{michaux}, as stated in
\cite[page 272]{michaux}, is the following. Let $R\subseteq\N^{d}$ be
a relation which is not $\fo{\N,+,<}$-definable. Furthermore, assume
that all sets of integers which are $\fo{\N,+,<,R}$-definable are
ultimately periodic. It can be shown that the negation of Muchnik's
characterization \cite[Theorem 2]{muchnik} of $\fo{\N,+,<}$-definable
sets does not hold (i.e. that Muchnik characterization holds). Since
Muchnik's characterization holds, it implies that $R$ is
$\fo{\N,+,<}$-definable, which contradicts the hypothesis.

A careful analysis of the proof of \cite[Theorem 5.1]{michaux} shows
that the use of proof by contradiction can be avoided. Removing the
proof by contradiction would lead to a method which, given a relation
$R\subseteq\N^{d}$, allows to construct a formula
$\nu_{R}(x)\in\fo{\N,+,<,R}$, defining a set of integers which is not
$\fo{\N,+,<}$-definable and which is expanding.

In this paper, we prove that this formula $\nu_{R}(x)$ can be chosen
independently of $R$. That is, there exists a $\fo{\N,+,<,R}$-formula
$\nu_{d}(x)$, such that, if $R$ is not $\fo{\N,+,<}$-definable, then
$\nu_{d}(x)$ defines an expanding set of integers, hence a set of
integers which is not $\fo{\N,+,<}$-definable.

\paragraph{}
Standard definitions are recalled in \autoref{sec:def}. Results
related to $\fo{\N,+,<}$ are recalled in \autoref{sec:some-res}.  The
main theorem is stated and proved in \autoref{mva}.
% \paragraph{}

% While there is no known application of this result, it can be imagined
% that this formula could be used as a piece of another result, where
% the set $R$ itself is defined by a logical formula with parameters. 
\section{Definitions}\label{sec:def}

In this section, definitions are recalled.  To avoid ambiguity, the
\quo{=} symbol is used for mathematical equality. The symbol \quo{:=}
is used when terms are defined. And the equality relation in formulas
is denoted by \quo{$\doteq$}.  Let $\Z$ denote the set of integers,
let $\N$ denote the set of non-negative integers and let $\N^{>0}$
denote the set of positive integers. For $a\in \Z$, let $\abs{a}$
denote the absolute value of $a$, that is, $a$ if $a\in\N$ and $-a$
otherwise. For $S$ a finite set of positive integers, let
$\lcm(S)$\index{lcm@$\lcm(S)$} denote the least common multiple of the
element of $S$, that is, the least integer $n$ such that, for each
$i\in S$, $i$ divides $n$.

For $a\in \N$, let $[a]$ denote $\set{0,\cdots,a}$. For $d\in\N^{>0}$,
let $S^{d}$ denote the set of $d$-tuples of elements of $S$. Let bold
letters denote $d$-tuples of variables, such as $\tu{x} \in\N^{d}$,
which is an abbreviation for $(x_{0},\cdots,x_{d-1})$. For
$i\in[d-1]$, the variable $x_{i}$ is called the $i$-th component of
$\tu{x}$.  Let $\max(\tu{x})$ denote $\max\set{x_{i}\mid i\in[d-1]}$,
let $\min(\tu{x})$ denote $\min\set{x_{i}\mid i\in[d-1]}$ and let
$\norm{\tu x}$ denote $\sum_{i=0}^{d-1}x_{i}$, it is said to be the
norm of $\tu x$.

Functions and relations are applied component-wise on $d$-tuples. In
particular $\tu x<\tu y$ means that $x_{i}<y_{i}$, for all
$i\in[d-1]$. Let $\abs{\tu x}$ (respectively, $\tu x+\tu y$) denote
the $d$-tuple $\left(\abs{x_{0}},\dots,\abs{x_{d-1}}\right)$
(respectively, $\left(x_{0}+y_{0},\dots,x_{d-1}+y_{d-1}\right)$). Let
$\tu f(n)$ denote $\left(f_{0}(n),\dots,f_{d-1}(n)\right)$ for
$n\in\N$.

\begin{definition}[Ultimately ($m$-)periodic]\label{def:ult-per}
  A set $R\subseteq\N$, is \emph{ultimately
    $m$-periodic}\index{Ultimately $m$-periodic} if there exists an
  integer $t\in\N$ such that for all $n\ge t$, $n\in R$ if and only if
  $n+m\in R$. A set is said to be ultimately periodic if it is
  ultimately $m$-periodic for some $m\in\N^{>0}$. The least such integer $t$
  is called the threshold of $R$. The least such integer $m$ is called
  the minimal period of $R$.
\end{definition}
\begin{definition}[Expanding set]\label{def:ult-per}
  A set $R\subseteq\N$, is \emph{expanding}\index{Expanding set of
    integers} if it is infinite and if the distance between two
  successive integers belonging to $R$ is not bounded.
\end{definition}

\subsection{First-order logic}
In this section, the definitions concerning the logical formalism of
this paper are introduced.

\begin{definition}[Vocabulary]\index{Vocabulary}
  A \emph{vocabulary} is a set of the form
  \begin{equation*}
    \V=\set{(R_{i}/d_{i})_{i<n},(f_{i}/d'_{i })_{i<p},(c_{i})_{i<q}},
  \end{equation*}
  where $n$, $p$ and $q$ are either integers or $\omega$ (the
  cardinality of the set of integers).
  
  For $i<n$, the $R_{i}$ is a \emph{relation} symbol and its arity is
  $d_{i}$. For $i<p$, the $f_{i}$ is a \emph{function} symbol and its
  arity is $d'_{i}$.  For $i<q$, the $c_{i}$ is a \emph{constant}
  symbol.
\end{definition}
In this paper the value of $p$ is $1$, apart in Lemma
\ref{lem:croissant}, and the only function is the addition.  The value
of $n$ is $1$ or $2$ relation. The relations considered in this paper
are the order relation $<$ and a relation $R$ of dimension
$d\in\N^{>0}$.

\begin{definition}[Structure]\index{Structure}\label{def:structure}
  Let $\V$ be a vocabulary. A $\V$-\emph{structure} $\s$ over the
  universe $\N$ is a tuple
  \begin{equation*}
    (\N,(R_{i}^{\s})_{i<n},(f_{i}^{\s})_{i<p},(c^{\s}_{i})_{i<q})
  \end{equation*}
  where $R_{i}^{\s}\subseteq \N^{d_{i}}$ for $i<n$, where
  $f_{i}^{\s}:\N^{d'_{i}}\to \N$ for $i<p$, and where $c^{\s}_{i}\in
  \N$ for $i<q$.

  For every constant symbol $x$, and $c\in\N$, let $\s[x/c]$ denote
  the structure such that $x^{\s[x/c]}=c$, and
  $\varsigma^{\s[x/c]}=\varsigma^{\s}$ for all other symbols
  $\varsigma\in\V\setminus\set{x}$.
\end{definition}
In this paper, we consider the standard interpretation of $+$ and $<$
over $\N$.  The first-order logic used in this paper is now defined.
\begin{definition}[$\fo{\N,\V}$]
  \label{formula} 
  The set of $\V$\emph{-terms} is defined by the grammar:
  \begin{eqnarray*}
    t(\V)::=c_{i}\mid f_{i}(t_{0},\dots,t_{d'_{i}-1})
  \end{eqnarray*}
  where $c_{i}$ is a constant of $\V$, $f_{i}$ is a function of $\V$
  and the $t_{j}$'s are $\V$-terms.

  The first-order logic over the vocabulary $\V$, denoted by
  $\fo{\N,\V}$, is defined by the grammar:
  \begin{eqnarray*}
    \fo{\N,\V}::=
    \exists x.\psi
    \mid\forall x.\psi
    \mid\neg\phi_{0}
    \mid\phi_{0}\land\phi_{1}
    \mid\phi_{0}\lor\phi_{1}
    \mid{}R_{i}\left(t_{0},\dots,t_{d_{i}-1}\right)
    \mid{}t_{0}\doteq{t_{1}}
  \end{eqnarray*}
  \index{$\land$}\index{$\neg$}
  \index{$\lor$}
  \index{R@$R_{i}\left(t_{0},\dots t_{d_{i}-1}\right)$} where
  the $t_{i}$'s are $\V$-term, $R_{i}$ is a symbol belonging to $\V$,
  the $\phi_{i}$'s are $\fo{\N,\V}$-formulas and $\psi$ is a
  $\fo{\N,\V,x}$-formula.
\end{definition}
The atomic formula $<(x,y)$ is denoted $x<y$.  Let
$\phi_{0}\implies\phi_{1}$\index{$\implies{}$} be an abbreviation for
$\left(\neg\phi_{0}\right)\lor\phi_{1}$ and let $\phi_{0}\iff\phi_{1}$
be an abbreviation for
$\left(\phi_{0}\implies\phi_{1}\right)\land\left(\phi_{1}\implies\phi_{0}\right)$.
The dimension and the curly brackets are omitted in logics'
notations. For instance $\fo{\N,\set{+/2,</2}}$ is abbreviated in
$\fo{\N,+,<}$.  Let $\phi$ be a
$\fo{\N,\V,x_{0},\dots,x_{d-1}}$-formula. Then
$\phi(x_{0},\dots,x_{d-1})$ is said to be an $\fo{\N,\V}$-formula with
dimension $d$.  The $x_{i}$'s for $i\in[d-1]$, are called the free
variables and do not belong to $\V$.  Given some $\V$-structure $\s$,
the semantic of a $\fo{\N,\V}$-formula is defined recursively as usual.
\begin{definition}[Definability]\index{Definability}\label{def:definability}
  Let $\V$ be a vocabulary and $\s$ be a $\V$-structure.  Let $d\in\N$
  and $\phi(x_{0},\dots,x_{d-1})$ be a $\fo{\N,\V}$-formula of dimension
  $d$.  The formula $\phi$ is said to \emph{define} the $d$-ary set
  \index{$\phi(\tu{x})^{\s}$ - set defined by a formula}
  $\set{\tu{n}\in \N^{d}\mid\s[\tu x/\tu n]\models \phi(\tu x)}$ in
  $\s$.  A set $R\subseteq \N^{d}$ is said to be $\fo{\N,\V}$-definable
  in $\s$ if there exists $\phi(x_{0},\dots,x_{d-1})\in \fo{\N,\V}$ such
  that $R=\phi(x_{0},\dots,x_{d-1})^{\s}$.
\end{definition}

\subsection{Some notations}
Some notations are introduced in this section in order to simplify
creation of formulas.  A notation is now introduced which allows to
simplify the logical definitions of functions.
\begin{notation}\label{fun-for}
  Let $d,d'\in\N$, and let $\V$ be a vocabulary.  Let:
  \begin{equation*}
    \phi(x_{0},\dots,x_{d-1};y_{0},\dots,y_{d'-1})
  \end{equation*}
  \index{$\phi(\tu{x}; \tu y)$ - function notation}denote that, for
  every $d$-tuple $\tu n\in\N^{d}$, there exists exactly one
  $d'$-tuple $\tu{n'}\in\N^{d'}$ such that
  $\s[\tu{x}/\tu{n}][\tu{y}/\tu{n'}]\models\phi(x_{0},\dots,x_{d-1},y_{0},\dots,y_{d'-1})$.
  Then the $d'$-tuple $\tu{n'}$ is denoted by $\phi(\tu{n})$.  More
  precisely, for $\psi(\tu y)$ a formula with $d'$ free variable,
  $\psi(\phi(\tu n))$ is an abbreviation for
  $\exists \tu{n'}.\phi(\tu n,\tu{n'})\land\psi(\tu{n'})$.
\end{notation}

The following notation states that some variables are interpreted by
the minimal value such that a formula holds.

\begin{notation}\label{min-exists}
  Let $F$ be a finite ordered set. Let $(\phi_{i})_{i\in{F}}$ be a set
  of $\fo{\N,\V}$-formulas.  Let $i\in F$.  A $\fo{\N,\V}$-formula
  $\minTu{i}{\phi_{i}}$ is introduced
  \index{min@$\minTu{i\in{}F}{\phi_{i}}$ - $i$ is minimal such that
    $\phi_{i}$} which states that $\phi_{i}$ holds and $i$ is minimal
  with this property. Let:
  \begin{equation*}
    \minTu{i}{\phi_{i}}:=\phi_{i}\land\bigwedge_{j\mid\ j<i}\neg \phi_{j}.
  \end{equation*}

  Similarly, let $\tu{x}=(x_{0},\dots, x_{d-1})$ be a tuple of
  variables and let $\phi(\tu{x})$ be a $\fo{\N,\V}$-formula. A
  $\fo{\N,\V,<}$-formula $\displaystyle \minTu{\tu{x}}{\phi_{i}(\tu{x})}$
  is introduced, \index{min@$\minTu{\tu{x}}{\phi(\tu{x})}$ - $\tu x$
    is minimal such that $\phi(\tu{x})$} which states that
  $\phi(\tu{x})$ holds and $\tu{x}$ is lexicographically minimal with
  this property. Let:
  \begin{equation*}
    \minTu{\tu{x}}{\phi(\tu{x})}:=\phi(\tu{x})\land
    \forall\tu{y}.\left\{\left[\bigvee_{j=0}^{d-1}\left(y_{j}<x_{j}\land\bigwedge_{k=0}^{j-1}y_{k}\doteq{}x_{k}\right)\right]\implies{\neg\phi(\tu{y})}\right\}.
  \end{equation*}

  Finally, for $\phi_{i}$ a family of $\fo{\N,\V}$-formulas, let
  $\minTu{i,\tu{x}}{\phi_{i}(\tu{x})}$ \index{min@
    $\minTu{i\in{}F,\tu{x}}{\phi_{i}(\tu{x})}$ - $i,\tu x$ is minimal
    such that $\phi_{i}(\tu{x})$} be a $\fo{\N,\V,<}$-formula which
  states that $\phi_{i}(\tu{x})$ holds and $(i,\tu{x})$ is
  lexicographically minimal with this property. Let:
  \begin{equation*}
    \minTu{i,\tu{x}}{\phi_{i}(\tu{x})}:=
    \minTu{i}{\exists \tu{y}.\phi_{i}(\tu{y})}\land
    \minTu{\tu{x}}{\phi_{i}(\tu{x})}.
  \end{equation*}
\end{notation}
An example of formula using this notation is now given.
\begin{example}
  Let $R$ be a unary relation symbol. Let
  $\phi(x):=\minTu{x}{R(x)\land \neg R(x+1)}$. This formula state that
  $x$ is the last element of the least sequence of successive elements
  of $R$.
\end{example}

Notations for implications and equivalences are standard. A notation
of the form \quo{if then else} is also needed. It is now introduced.
\begin{notation}\label{ite}
  Let $F$ be a finite set. Let $\psi$ be a $\fo{\N,\V}$-formula. For
  $i\in F$, let $\phi_{i}(\tu{x})$ and $\chi_{i}(\tu{x})$ be
  $\fo{\N,\V}$-formulas. Let \index{$\ite{\bigvee_{i\in F}\exists\tu
      x.\phi_{i}(\tu{x})}{\chi_{i}(\tu{x})}\psi$ - If $\phi$ then
    $\chi$ else $\psi$}
  \begin{eqnarray*}
    \ite{\bigvee_{i\in{F}}\exists\tu{x}.\phi_{i}(\tu{x})}{\chi_{i}(\tu{x})}\psi
  \end{eqnarray*}
  be a $\fo{\N,\V}$-formula which states that if there exists $i\in F$
  and $\tu n\in\N^{d}$ such that $\phi_{i}(\tu{n})$ holds, then
  $\chi_{i}(\tu{n})$, otherwise $\psi$.  Formally, the formula is:
  \begin{eqnarray*}
    \left\{
      \bigvee_{i\in{F}}\exists\tu{x}.\phi_{i}(\tu{x})
      \land
      \chi_{i}(\tu{x})
    \right\}
    \lor
    \left\{
      \bigwedge_{i\in F}\forall\tu{x}.\neg\phi_{i}(\tu{x})
      \land
      \psi
    \right\}.
  \end{eqnarray*}
\end{notation}
An example of formula using this notation is now given.
\begin{example}
The formula
\begin{equation*}
  \ite{\bigvee_{i=3}^{5}x\doteq i}{\exists z. z\times
    i\doteq y}{\exists z. 2z+1\doteq y}
\end{equation*}
states that if $x$ is 3, 4 or 5, then $y$ is a multiple of $x$,
otherwise $y$ is odd.
\end{example}
In this paper, the two preceding notations are used together, stating
that, if there are some $i\in F$ and $\tu x\in \N^{d}$ such that
$\phi_{i}(\tu{x})$ holds, the minimal pair is considered in
$\chi_{i}(\tu{x})$, otherwise the formula $\psi$ is considered.

\section{Some results about $\fo{\N,+,<}$-definable sets}\label{sec:some-res}
In this section, theorems concerning $\fo{\N,+,<}$-definable sets of
dimension $d>0$ are recalled. The theorems concerning any positive
dimension are given in Section \ref{sec:theo-d>0}, and the theorem
concerning the dimension $1$ are given in Section \ref{sec:theo-d=1}.

\subsection{Positive dimension}
\label{sec:theo-d>0}
The theorem given in this section is a variant of the characterization
of $\fo{\N,+,<}$-definable relations given in \cite[Theorem
1]{muchnik}. This presentation of the theorem is inspired of
\cite[Theorem 5.5]{michaux}.  This characterization of
$\fo{\N,+,<}$-definable sets consists in two properties, a local
property and a recursive property.  The recursive property of
\cite[Theorem 5.5]{michaux} uses the notion of section, which is now
defined.
\begin{definition}[Section]
  \index{Section of a susbet of $\N^{d}$} Let the dimension $d$ be at
  least 2, $R\subseteq\N^{d}$, $i\in[d-1]$ and $c\in\N$. Then the
  \emph{section of $R$ in $x_{i}=c$}, denoted by $\secti{R}{i}{c}$, is
  the set of $(d-1)$-tuples obtained from $R$ by fixing the $i$th
  component to $c$, that is: \index{Section@$\secti{R}{i}{c}$ -
    Section $x_{i}=c$ of $R$}
  \begin{equation*}
    \secti{R}{i}{c}:=\set{(x_{0},\dots,x_{d-2})\in \N^{d-1}\mid{}(x_{0}, \dots, x_{i-1},c,x_{i},\dots, x_{d-2})\in R}.
  \end{equation*} 
\end{definition}
The sections of the addition relation are now given as examples.
\begin{example}\label{ex:subspace}
  Let $R=\set{(x_{0},x_{1},x_{2})\mid x_{0}+x_{1}=x_{2}}$. Its
  sections are now studied. Let $c\in\N$. One has:
  \begin{eqnarray*}
              \secti   {R}{0}{c}     =\secti   {R}{1}{c}     =\{(n,    n+c )\mid{}n\in\N\},\\    \secti{R}{2}{c}   =\{(n,c-n )\mid{}n\le c\},\\
  \end{eqnarray*}
\end{example}

The local property of \cite[Theorem 5.5]{michaux} uses the notion of
cube which is now introduced.
\begin{notation}
  Let $d\in\N$, $R\subseteq\N^{d}$, $\tu{x}\in\N^{d}$ and
  $k\in\N$. The $R$\emph{-cube} at $\tu x$ of size $k$, denoted by
  $\cube{\tu{x}}{k}R$\index{crxk@$\cube{\tu{x}}{k}R$ - The cube of $R$
  of radius $k$ around $\tu x$}, is defined as:
  \begin{equation*}
    \cube{\tu{x}}{k}R:=\set{\tu{y}\in[k]^{d}\mid{}\tu x+\tu y\in R}
  \end{equation*}
\end{notation}
The following lemma considers equality of cubes.
\begin{lemma}\label{lem:eq-cube}
  Let $d\in\N$, and $R$ be a $d$-ary relation symbol. Let $\s$ be a
  $\set{\N,+,<,R}$-structure. There exists a $\fo{\N,+,<,R}$-formula
  $\beta_{d}(\tu{x},\tu y,k)$\index{bd@$\beta_{d}(\tu{x},\tu y,k)$ -
    The cube of radius $k$ at positions $\tu x$ and $\tu y$ are
    equals.} which states that the cubes $\cube{\tu{x}}{k}{R^{\s}}$
  and $\cube{\tu{y}}{k}{R^{\s}}$ are equal.
\end{lemma}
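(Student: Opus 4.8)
The plan is to construct $\beta_{d}(\tu{x},\tu{y},k)$ by directly unfolding the definition of cube equality: the cubes $\cube{\tu{x}}{k}{R^{\s}}$ and $\cube{\tu{y}}{k}{R^{\s}}$ are equal if and only if, for every offset $\tu{z}\in[k]^{d}$, membership of $\tu{x}+\tu{z}$ in $R$ coincides with membership of $\tu{y}+\tu{z}$ in $R$. The subtlety is that $k$ is a variable, not a constant, so the range $[k]^{d}$ of offsets cannot be enumerated by a fixed conjunction; it must be handled by a universal quantifier over a $d$-tuple $\tu{z}$ together with the constraint $\tu{z}\le(k,\dots,k)$, using the component-wise conventions for $\le$ already fixed in \autoref{sec:def}.

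Concretely, I would set
\begin{equation*}
  \beta_{d}(\tu{x},\tu{y},k):=\forall\tu{z}.\left[\left(\bigwedge_{i=0}^{d-1}z_{i}\le k\right)\implies\Bigl(R(\tu{x}+\tu{z})\iff R(\tu{y}+\tu{z})\Bigr)\right],
\end{equation*}
where $\tu{x}+\tu{z}$ abbreviates the $d$-tuple of $\V$-terms $(x_{0}+z_{0},\dots,x_{d-1}+z_{d-1})$ — legitimate since $+$ is in the vocabulary and $\V$-terms are closed under it — and $z_{i}\le k$ abbreviates $z_{i}<k\lor z_{i}\doteq k$. One then checks that this is a syntactically well-formed $\fo{\N,+,<,R}$-formula with free variables among $x_{0},\dots,x_{d-1},y_{0},\dots,y_{d-1},k$: each $z_{i}$ is bound by the single quantifier $\forall\tu{z}$, the atoms $R(\cdot)$ apply the symbol $R$ to $d$ terms, and the order atoms use $<$.

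The correctness argument is a short semantic unwinding. Fix a structure $\s$ and interpretations $\tu{a}$, $\tu{b}$, $k$ of $\tu{x}$, $\tu{y}$, $k$. By \autoref{def:definability} the formula holds at this assignment iff for all $\tu{c}\in\N^{d}$ with $\tu{c}\le(k,\dots,k)$ — i.e. for all $\tu{c}\in[k]^{d}$ — we have $\tu{a}+\tu{c}\in R^{\s}\iff\tu{b}+\tu{c}\in R^{\s}$. By the definition of the $R$-cube this is exactly the statement that $\tu{c}\in\cube{\tu{a}}{k}{R^{\s}}\iff\tu{c}\in\cube{\tu{b}}{k}{R^{\s}}$ for every $\tu{c}\in[k]^{d}$, and since both cubes are subsets of $[k]^{d}$ this is precisely $\cube{\tu{a}}{k}{R^{\s}}=\cube{\tu{b}}{k}{R^{\s}}$.

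I do not anticipate a real obstacle here; the statement is essentially a bookkeeping lemma. The only point that needs care is making sure the offset tuple $\tu{z}$ ranges over exactly $[k]^{d}$ and not, say, over all of $\N^{d}$ or over a box whose side is off by one — this is governed by the convention $[a]=\set{0,\dots,a}$ from \autoref{sec:def}, which matches the definition of $\cube{\tu{x}}{k}R$ with offsets in $[k]^{d}$, so the two line up without an index shift. A secondary bookkeeping point is confirming that substituting the terms $x_{i}+z_{i}$ into $R$ is allowed by the grammar of \autoref{formula}, which it is, since relation symbols are applied to arbitrary $\V$-terms and those are closed under the function symbol $+$.
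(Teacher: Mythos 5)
Your construction is correct and is essentially the paper's own proof: the paper defines $\beta_{d}(\tu{x},\tu y,k)$ as $\forall \tu z.\,[\max(\tu z)\le k]\implies[R(\tu{x}+\tu z)\iff R(\tu y+\tu z)]$ with $\max(\tu z)\le k$ unfolding to the componentwise conjunction you wrote. Your choice of $z_{i}\le k$ (rather than strict inequality) correctly matches the convention $[k]=\set{0,\dots,k}$ in the definition of the cube, and your semantic unwinding is the intended justification.
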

\begin{proof}
  The formula is:
  \begin{equation}\label{eq:beta}
    \beta_{d}(\tu{x},\tu y,k):=  \forall \tu z.
    \left[
      \max(\tu z)\le k
    \right]
    \implies
    {
      \left[R(\tu{x}+\tu z)\iff R(\tu
        y+\tu z )\right]
    },
  \end{equation}
  where $\max(\tu z)<k$ denotes $\bigwedge_{i=0}^{d-1}z_{i}<k$.
\end{proof}
The local property of \cite[Theorem 5.5]{michaux} also uses the
notion of \emph{shifting} a cube. This notion is now introduced.
\begin{definition}
  Let $d\in \N$, $R\subseteq\N^{d}$, $\tu x\in\N^{d}$ and
  $\tu r\in\Z^{d}$. Then it is said that the pair $(\tu x,k)$ can be
  \emph{shifted by $\tu r$} in $R$ if
  $\cube{\tu{x}}{k}R=\cube{\tu{x}+\tu r}{k}R$.
\end{definition}
It is now explained how to state in first order logic that the pair
$(\tu x,k)$ can be {shifted by $\tu r$} in $R$.
\begin{lemma}\label{lem:eq-cube}
  Let $d\in\N$, and $R$ be a $d$-ary relation symbol. Let $\s$ be a
  $\set{\N,+,<,R}$-structure. There exists a $\fo{\N,+,<,R}$-formula
  $\sigma_{d}(\tu r,k,\tu{x})$\index{sd@$\sigma_{d}(\tu r,k,\tu{x})$ -
    The \shift{R^s}{\tu x}{k}{\tu r}} which states that the
  \shift{R^s}{\tu x}{k}{\tu r}.
\end{lemma}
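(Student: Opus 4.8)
The plan is to reduce the claim to the already-established Lemma about equality of cubes (the formula $\beta_d$). Recall that the pair $(\tu x,k)$ can be shifted by $\tu r$ in $R$ exactly when $\cube{\tu x}{k}R=\cube{\tu x+\tu r}{k}R$. The only subtlety is that $\tu r$ ranges over $\Z^d$, not $\N^d$, whereas formulas of $\fo{\N,+,<,R}$ speak only about non-negative integers. So first I would handle the sign of each component of $\tu r$ separately. Concretely, I would introduce $d$ fresh Boolean choices — encoded by a disjunction over sign vectors $\varepsilon\in\set{0,1}^d$ — and for each sign pattern write the shift as an addition on one side of the equality: if $r_i\ge 0$ we compare position $x_i$ with $x_i+r_i$, and if $r_i<0$ (writing $r_i=-s_i$ with $s_i\in\N$) we instead compare $x_i+s_i$ with $x_i$. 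In first-order logic over $\N$ this is just: for a given sign pattern, use two tuples $\tu x'$ and $\tu x''$ with $\tu x'=\tu x+\tu r^+$ and $\tu x''=\tu x+\tu r^-$ (the positive and negative parts of $\tu r$), and then assert $\beta_d(\tu x',\tu x'',k)$.

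More precisely, the formula I would write is
\begin{equation*}
  \sigma_d(\tu r,k,\tu x):=\bigvee_{\varepsilon\in\set{0,1}^d}
  \exists \tu x'\, \exists \tu x''.\
  \left[\bigwedge_{i:\,\varepsilon_i=0} (x'_i\doteq x_i+r_i \land x''_i\doteq x_i)\right]
  \land
  \left[\bigwedge_{i:\,\varepsilon_i=1} (x'_i\doteq x_i \land x''_i\doteq x_i+r_i)\right]
  \land \beta_d(\tu x',\tu x'',k),
\end{equation*}
where the $\varepsilon_i=0$ case corresponds to a non-negative $i$-th component of the shift and $\varepsilon_i=1$ to a non-positive one; a component that is zero is covered by either branch, so the disjunction is exhaustive. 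Here $\tu r$ is supplied as a $d$-tuple of non-negative variables together with the implicit sign vector $\varepsilon$; if instead one wants a single genuine $\Z^d$ argument, one encodes $\tu r\in\Z^d$ by a pair $(\tu r^+,\tu r^-)\in\N^d\times\N^d$ with $\min(r^+_i,r^-_i)=0$, and the formula splits into the obvious additions on each side. Either way the key point is that $x+r$ with $r$ possibly negative is replaced, after moving $r$ to the appropriate side of an equation, by an honest addition of non-negative integers.

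The correctness argument is then immediate: for the sign pattern $\varepsilon$ matching the actual signs of the components of $\tu r$, the tuples $\tu x'$ and $\tu x''$ are forced to be $\tu x$ shifted by the non-negative part of $\tu r$ and by the non-negative part of $-\tu r$ respectively, so $\tu x'-\tu x''=\tu r$ componentwise; hence $\beta_d(\tu x',\tu x'',k)$ holds iff $\cube{\tu x'}{k}{R^\s}=\cube{\tu x''}{k}{R^\s}$, which by a shift of the bound variable $\tu z$ in the definition of $\beta_d$ is equivalent to $\cube{\tu x}{k}{R^\s}=\cube{\tu x+\tu r}{k}{R^\s}$, i.e. to the pair $(\tu x,k)$ being $\tu r$-shiftable in $R^\s$. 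Conversely, any other branch of the disjunction either forces a tuple to equal something not matching the true signs (and then cannot be satisfied when the corresponding component of $\tu r$ has the wrong sign) or, when a component is zero, yields the same comparison. I do not anticipate a genuine obstacle here; the one place to be careful is the bookkeeping that the disjunction over $2^d$ sign patterns really is exhaustive and that the existential witnesses $\tu x',\tu x''$ are uniquely determined within each branch, so that $\sigma_d$ says exactly what it should rather than something weaker.
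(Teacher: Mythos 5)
Your overall strategy is the paper's: reduce to $\beta_{d}$ and handle the fact that $\tu r$ ranges over $\Z^{d}$ by splitting it into non-negative parts (the paper simply sets $\sigma_{d}(\tu r,k,\tu x):=\beta_{d}(\tu x,\tu x+\tu r,k)$ and remarks that a $\Z$-valued $\tu r$ is simulated by a pair of tuples in $\N^{d}$ with $\tu r=\tu r^{0}-\tu r^{1}$). However, your explicit encoding does not define the stated predicate. The lemma requires the comparison $\cube{\tu x}{k}{R^{\s}}=\cube{\tu x+\tu r}{k}{R^{\s}}$, whereas your formula asserts $\cube{\tu x+\tu r^{+}}{k}{R^{\s}}=\cube{\tu x+\tu r^{-}}{k}{R^{\s}}$. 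Since $\tu x+\tu r^{+}=(\tu x+\tu r)+\tu r^{-}$, both base points of your comparison are the intended ones translated by $\tu r^{-}$, and equality of cubes is not invariant under translating both base points: in $\beta_{d}$ the bound tuple $\tu z$ ranges over $[k]^{d}$ in both cubes, so it cannot absorb such a translation, and the step you justify ``by a shift of the bound variable $\tu z$'' is false. Concretely, take $d=1$, $R^{\s}=\set{10}$, $k=0$, $\tu x=13$, $\tu r=-3$: the pair $(13,0)$ is not shiftable by $-3$, because $\cube{13}{0}{R^{\s}}=\emptyset$ while $\cube{10}{0}{R^{\s}}=\set{0}$; yet your branch for the negative sign compares the cubes at $13$ and at $16$, which are both empty, so your formula holds. (Your formula is correct exactly when $\tu r^{-}=\tu 0$, i.e.\ when no component of $\tu r$ is negative.)

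The repair is to move $\tu r^{-}$ onto the shifted point rather than onto $\tu x$: introduce a single witness tuple $\tu y$ constrained by $y_{i}+r^{-}_{i}\doteq x_{i}+r^{+}_{i}$ for each $i$ (so that $\tu y=\tu x+\tu r$ is expressed without subtraction, and the formula is unsatisfiable when $\tu x+\tu r\notin\N^{d}$), and then assert $\beta_{d}(\tu x,\tu y,k)$. With that correction your argument goes through and coincides with what the paper leaves implicit in its remark about simulating $\Z$-valued variables.
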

\begin{proof}
  The formula is:
  \begin{equation}\label{eq:sigma}
    \sigma_{d}(\tu r,k,\tu{x}):=\beta_{d}(\tu{x},\tu x+\tu r,k).
  \end{equation}
  where $\beta_{d}$ is the formula of Lemma \ref{lem:eq-cube}.
  Note that $\tu r\in\Z^{d}$. Formally, the variables takes values in
  $\N$ in our formalism.  It is trivial to simulate such variables
  taking values in $\Z$ by using twice as many variables
  $\tu r^{0},\tu r^{1}\in\N^{d}$ and considering $\tu r$ as
  $\tu r^{0}-\tu r^{1}$.
\end{proof}
The notion of pairs which admits a shift whose norm is bounded by some
constant $s$ is now introduced.
\begin{definition}
  Let $d\in\N^{>0}$, $R\subseteq\N^{d}$, $\tu x\in\N^{d}$, and $s\in\N$. If
  there exists $\tu r\in\Z^{d}\setminus\left(0,\dots,0\right)$ such
  that $\max(|r|)\le s$ and such that the \shift{R}{\tu x}{k}{\tu r},
  then the pair $(\tu x,k)$ is said to be $s$-shiftable in $R^{\s}$.
\end{definition}
It is now explained how to state in first order logic that the pair
$(\tu x,k)$ is $s$-shiftable in $R$.
\begin{lemma}\label{lem:s-shiftable}
  Let $d\in\N$, and $R$ be a $d$-ary relation symbol. Let $\s$ be a
  $\set{\N,+,<,R}$-structure. There exists a $\fo{\N,+,<,R}$-formula
  $\varsigma_{d}(s,k,\tu{x})$\index{sd@$\varsigma_{d}(s,k,\tu{x})$ -
    The \shift{R^\s}{\tu x}{k}{s}} which states that the
  \shiftable{R^\s}{\tu x}{k}{s}.
\end{lemma}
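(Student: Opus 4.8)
The plan is to express $s$-shiftability by a finite disjunction over all admissible shift vectors, using the formula $\sigma_d$ of Lemma~\ref{lem:eq-cube} (the one defining shifts by a fixed $\tu r$). Since a pair $(\tu x,k)$ is $s$-shiftable in $R^\s$ precisely when there is some nonzero $\tu r\in\Z^d$ with $\max(|\tu r|)\le s$ such that $(\tu x,k)$ can be shifted by $\tu r$, and since the set of such $\tu r$ is finite for each fixed $s$, one is tempted to simply take $\bigvee_{\tu r}\sigma_d(\tu r,k,\tu x)$. The subtlety is that $s$ is a \emph{variable} of the formula $\varsigma_d$, not a fixed constant, so the index set of the disjunction cannot literally depend on it.

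First I would handle this by observing that $\tu r$ can be carried as a tuple of first-order variables rather than as an index. Concretely, the formula is
\begin{equation*}
  \varsigma_d(s,k,\tu x):=\exists \tu r^0.\,\exists \tu r^1.\,
  \left[\max(\tu r^0+\tu r^1)\le s\right]\land
  \left[\neg(\tu r^0\doteq \tu r^1)\right]\land
  \sigma_d(\tu r^0-\tu r^1,k,\tu x),
\end{equation*}
where, as in the proof of Lemma~\ref{lem:eq-cube} for $\sigma_d$, the $\Z$-valued vector $\tu r$ is simulated by the pair $(\tu r^0,\tu r^1)\in(\N^d)^2$ via $\tu r=\tu r^0-\tu r^1$; the conjunct $\neg(\tu r^0\doteq\tu r^1)$ expresses $\tu r\neq(0,\dots,0)$, and $\max(\tu r^0+\tu r^1)\le s$ is a convenient way to force both $\max(\tu r^0)\le s$ and $\max(\tu r^1)\le s$, hence $\max(|\tu r|)\le s$ (one could also write the two bounds separately). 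Here $\max(\tu r^0+\tu r^1)\le s$ abbreviates $\bigwedge_{i=0}^{d-1}(r^0_i+r^1_i)\le s$, and $\tu r^0\doteq\tu r^1$ abbreviates $\bigwedge_{i=0}^{d-1}r^0_i\doteq r^1_i$; all of these are $\fo{\N,+,<}$-formulas, so $\varsigma_d$ is indeed a $\fo{\N,+,<,R}$-formula.

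Finally I would check correctness: if $\s[\ldots]\models\varsigma_d(s,k,\tu x)$ then the witnesses $\tu r^0,\tu r^1$ yield a vector $\tu r=\tu r^0-\tu r^1\in\Z^d$ which is nonzero, satisfies $\max(|\tu r|)\le\max(\tu r^0+\tu r^1)\le s$, and for which $\sigma_d(\tu r,k,\tu x)$ holds, i.e.\ the pair $(\tu x,k)$ can be shifted by $\tu r$ in $R^\s$; by definition this makes $(\tu x,k)$ $s$-shiftable. Conversely, given such an $\tu r$, writing $r_i^0:=\max(r_i,0)$ and $r_i^1:=\max(-r_i,0)$ produces witnesses satisfying all conjuncts. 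There is no real obstacle here; the only thing to be careful about is the bound on $|\tu r|$ — one must make sure the chosen encoding of the conjunct bounding $\max(|\tu r|)$ genuinely captures $\max(|r_i|)\le s$ for every coordinate — and the fact that the shift vector is quantified rather than ranged over in a meta-level disjunction, which is exactly what keeps the formula a single fixed object independent of the value of $s$.
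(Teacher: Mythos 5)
Your proof is correct and takes essentially the same approach as the paper: the paper's formula is $\varsigma_{d}(s,k,\tu{x}):=\exists \tu r\in\Z^{d}.\ \max(|\tu r|)\le s\land\bigvee_{i=0}^{d-1}r_{i}\not\doteq{}0 \land \sigma_d(\tu r,k,\tu{x})$, i.e.\ an existential quantification over the shift vector combined with the nonzero and boundedness conditions and the formula $\sigma_d$ of Lemma~\ref{lem:eq-cube}. You merely make explicit the simulation of $\Z$-valued variables by pairs $(\tu r^0,\tu r^1)\in(\N^d)^2$, which the paper delegates to the remark in the proof of $\sigma_d$; your encoding of the bound and of nonzeroness is sound.
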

\begin{proof}
  The formula is:
  \begin{equation}\label{eq:varsigma}
    \varsigma_{d}(s,k,\tu{x}):=    \exists \tu r\in\Z^{d}.
    \max(|\tu r|)\le
    s\land\bigvee_{i=0}^{d-1}r_{i}\not\doteq{}0 \land \sigma_d(\tu r,k,\tu{x}).
  \end{equation}
  where $\sigma_{d}$ is the formula of Lemma \ref{lem:eq-cube}.
\end{proof}
\paragraph{}

A variant of Muchnik's theorem is now recalled.
\begin{theorem}[{\cite[Theorem 5.5]{michaux}}]
  \label{theo-muchnik-original}
  Let $d\in\N^{>0}$ and $R\subseteq\N^{d}$. The following properties are
  equivalent;
  \begin{enumerate}
  \item The set $R$ is $\fo{\N,+,<}$-definable.
  \item 
    \begin{enumerate}[(a)]
    \item\label{muc-recur} If the dimension $d$ is at least 2, then
      all sections of $R$ are $\fo{\N,+,<}$-definable and
    \item\label{muc-local} there exists $s\in\N$ such that, for every
      $k\in\N$, there exists $t\in\N$ such that, for all
      $\tu{c}\in\N^{d}$ with $t\le\min(\tu{c})$, the \shift{R}{\tu
        x}{k}{s}.
    \end{enumerate}
  \end{enumerate}
\end{theorem}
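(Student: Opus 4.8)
This is a variant of Muchnik's characterisation of the $\fo{\N,+,<}$-definable relations (\cite[Theorem 2]{muchnik}, here in the form of \cite[Theorem 5.5]{michaux}); the plan is to establish the two implications separately, the implication from (2) to (1) being the substantive one. \emph{From (1) to (2):} suppose $R$ is defined by some $\phi(\tu x)\in\fo{\N,+,<}$. For (a), substituting the constant $c$ into the $i$-th argument of $\phi$ gives an $\fo{\N,+,<}$-formula defining $\secti{R}{i}{c}$. For (b) I would use quantifier elimination \cite{Presburger} to rewrite $\phi$ as a Boolean combination of linear inequalities $\ell_j(\tu x)\le c_j$ (finitely many, $\ell_j$ integer-linear) and congruences $\ell'_l(\tu x)\equiv d_l\pmod{m_l}$. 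Let $P:=\lcm\set{m_l\mid l}$, let $A$ bound the sum of absolute values of the coefficients of each $\ell_j$, and let $B$ bound the sup-norm of some nonzero common integer zero of any subfamily of $\set{\ell_j}$ of rank $<d$ (there are finitely many such subfamilies, each possessing such a zero). Set $s:=PB$. Given $k$, take $t$ large; for $\tu c$ with $t\le\min(\tu c)$ call $\ell_j$ \emph{active at $\tu c$} when $\abs{\ell_j(\tu c)-c_j}\le(k+s)A$. The key point is that at most $d-1$ of the $\ell_j$ can be active at $\tu c$: $d$ active inequalities with independent coefficient vectors would force $\tu c$ to lie within $O(k+s)$ of their unique common zero, a fixed point excluded once $t$ is large. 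Hence the active $\ell_j$ have rank $<d$, so there is a nonzero integer $\tu v$ with $\ell_j(\tu v)=0$ for every active $j$ and $\max(\abs{\tu v})\le B$; put $\tu r:=P\tu v$, so $\tu r\ne 0$ and $\max(\abs{\tu r})\le s$. Every inactive inequality and every one-signed inequality takes a constant truth value on the box $\tu c+[k]^d$ and on all of its translates of sup-norm $\le s$ (for $t$ large), while every active inequality and every congruence is unchanged by the shift $\tu r$, since $\tu r$ shifts $\ell_j$ by $P\ell_j(\tu v)=0$ on active $j$ and shifts $\ell'_l$ by a multiple of $m_l$. So the cube of $R$ at $\tu c$ of size $k$ equals the one at $\tu c+\tu r$, i.e., the pair $(\tu c,k)$ is $s$-shiftable, which is (b).

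\emph{From (2) to (1)} I would induct on $d$. For $d=1$, (a) is vacuous and (b) says $R$ has local period at most $s$ on arbitrarily long windows; taking $p:=\lcm\set{1,\dots,s}$ and a window length that is a large multiple of $p$, for each large $c$ the available shift $r_c$ satisfies $\abs{r_c}\mid p$, which upgrades the $r_c$-periodicity of $R$ on that window to $p$-periodicity on a sub-window of length at least $p$; as $c$ ranges over all large integers these sub-windows cover a half-line, so $R$ is ultimately $p$-periodic and hence $\fo{\N,+,<}$-definable. For $d\ge 2$: by (a) every $\secti{R}{i}{c}$ is $\fo{\N,+,<}$-definable, so for every fixed $T$ the set $R\cap\set{\tu x\mid\min(\tu x)<T}$ is $\fo{\N,+,<}$-definable, being the finite union over $i<d$ and $c<T$ of the sections reinstated at $x_i=c$; it remains to make $R\cap\set{\tu x\mid\min(\tu x)\ge T}$ definable for a suitable $T$. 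The plan is to use (b), through an iteration of shifts analogous to the one-dimensional case but carried out in $d$ variables, to produce a fixed nonzero period vector $\tu p$ of bounded sup-norm and a threshold $T$ with $\tu x\in R\iff\tu x+\tu p\in R$ whenever $\min(\tu x)\ge T$. Given this, $R$ on that region is recovered by translation along $\tu p$ from its restriction to a fundamental domain of $\tu p$ inside the region — a finite union of slabs $x_i=c$, each a section and hence definable — and translation along a fixed vector is expressible in $\fo{\N,+,<}$; so $R$ is $\fo{\N,+,<}$-definable.

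The step I expect to be hardest is the extraction of a single bulk period vector $\tu p$ in the inductive case: the one-dimensional $\lcm$ trick does not carry over verbatim, since a shift of sup-norm $\le s$ need not be a divisor of a common period, so reconciling the position-dependent shifts supplied by (b) into one global period — and verifying that this period is compatible with the section hypothesis near the coordinate hyperplanes — is where the genuine content of Muchnik's theorem lies and where most of the work would go.
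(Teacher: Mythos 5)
The paper does not prove this theorem: it is recalled, with a citation, as a variant of \cite[Theorem 5.5]{michaux} (itself resting on \cite[Theorem 2]{muchnik}), and the only in-paper commentary is that the cited proof survives the replacement of $t\le\max(\tu c)$ by $t\le\min(\tu c)$. So there is no proof in the paper to compare yours against, and your attempt has to stand on its own. It does not: the $(2)\Rightarrow(1)$ direction for $d\ge 2$ is left as a plan, and the plan's central intermediate claim is false.

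Concretely, you propose to deduce from (b) a single nonzero vector $\tu p$ of bounded sup-norm and a threshold $T$ with $\tu x\in R\iff\tu x+\tu p\in R$ whenever $\min(\tu x)\ge T$, and then to recover $R$ on the bulk from a fundamental domain made of sections. No such global period need exist, even for sets satisfying (1): take $R=\set{(x,y)\mid x\le y}\cup\set{(x,y)\mid 2y\le x}$, which is $\fo{\N,+,<}$-definable and hence satisfies (2), but a period $\tu p=(p_1,p_2)$ valid at all points with large minimum would have to preserve membership across the boundary $x=y$ (forcing $p_1=p_2$) and across the boundary $x=2y$ (forcing $p_1=2p_2$), hence $\tu p=0$. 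Condition (b) only guarantees, at each position $\tu c$, \emph{some} shift $\tu r$ with $\max(\abs{\tu r})\le s$, and the direction of that shift genuinely varies with $\tu c$; reconciling these position-dependent shifts --- typically by partitioning the bulk into finitely many cones on each of which one fixed shift works, with the section hypothesis (a) controlling what happens near the cone boundaries --- is the actual content of Muchnik's theorem, and you have not carried it out (as you yourself flag). The $(1)\Rightarrow(2)$ direction via quantifier elimination, and the $d=1$ base case via $p=\lcm\set{1,\dots,s}$, are sound in outline, but without the multidimensional converse the proof is incomplete where it matters.
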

Property \eqref{muc-local} is now commented. Intuitively, the integer
$s$ represents the bound on the norm of the shift, the integer $k$
represents the size of the cube, the integer $t$ represents a
threshold and the $d$-tuple $\tu{c}$ represents the lowest corner of
the cubes considered. Property \eqref{muc-local} states that there
exists a distance $s$ such that for all sizes $k$ of cubes, there
exists cubes of size $k$ whose components are arbitrary great and this
cube is $s$-shiftable

Let us say a word about the difference between \cite[Theorem
2]{muchnik}, \cite[Theorem 5.5]{michaux} and Theorem
\ref{theo-muchnik-original}. The version of \cite[Theorem 2]{muchnik}
considers a notion of periodicity, while \cite[Theorem 5.5]{michaux}
and Theorem \ref{theo-muchnik-original} consider a notion of
shift. Each of those notion can be restated using the other
notion. The version of \cite[Theorem 2]{muchnik} consider relations
over $\Z$ while \cite[Theorem 5.5]{michaux} and Theorem
\ref{theo-muchnik-original} only considers relations over $\N$. The
condition about $\tu c$ in Property \eqref{muc-local} of Theorem
\ref{theo-muchnik-original} is \quo{$t\le\min(\tu{c})$}, while it is
\quo{$t\le\norm{\tu{c}}$} in \cite[Theorem 2]{muchnik} and it is
\quo{$t\le\max(\tu{c})$} in \cite[Theorem 5.5]{michaux}.  The proof of
\cite[Theorem 5.5]{michaux} still holds when $t\le\max(\tu{c})$ is
replaced by $t\le\min(\tu{c})$ or by $t\le\norm{\tu c}$.

One of the main interest of Theorem \ref{theo-muchnik-original} is given
in the following Theorem.
\begin{theorem}[{\cite[Theorem 2]{muchnik}}] \label{much-orig-formula}
  There exists a $\fo{\N,+,<,R}$-formula
  $\mucF{d}$\index{mud@$\mucF{d}$ - The formula which states that $R$
    is $\fo{\N,+,<}$-definable} such that, for every
  $\set{\N,+,<,R}$-structure $\s$, $\s\models\mucF{d}$ if and only if
  $R^{\s}$ is $\fo{\N,+,<}$-definable.
\end{theorem}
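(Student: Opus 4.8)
The plan is to derive this as a corollary of Theorem~\ref{theo-muchnik-original}, by induction on $d$, rendering the two conditions of that theorem as a single $\fo{\N,+,<,R}$-sentence.

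First I would handle Property~\eqref{muc-local}, which is the easy half: every quantifier occurring in it ranges over $\N$, which is precisely the universe of our logic, and its innermost assertion ``the pair $(\tu c,k)$ is $s$-shiftable in $R$'' is already captured by the formula $\varsigma_d(s,k,\tu c)$ of Lemma~\ref{lem:s-shiftable}. Hence Property~\eqref{muc-local} is equivalent to the $\fo{\N,+,<,R}$-sentence
\[
  \lambda_d \ :=\ \exists s\ \forall k\ \exists t\ \forall \tu c.\ \bigl(t \le \min(\tu c)\bigr)\implies \varsigma_d(s,k,\tu c),
\]
where $t\le\min(\tu c)$ abbreviates $\bigwedge_{i=0}^{d-1} t\le c_i$ and $\forall\tu c$ abbreviates the block $\forall c_0\cdots\forall c_{d-1}$. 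For the base case $d=1$, Property~\eqref{muc-recur} is vacuous, so Theorem~\ref{theo-muchnik-original} gives that $R^{\s}$ is $\fo{\N,+,<}$-definable iff $\s\models\lambda_1$; thus one may set $\mucF{1}:=\lambda_1$.

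For the inductive step I would assume $d\ge 2$ and that a sentence $\mucF{d-1}$ over the vocabulary $\set{+,<,R'}$, with $R'$ a $(d-1)$-ary relation symbol, has already been constructed so that, in every $\set{\N,+,<,R'}$-structure, $\mucF{d-1}$ holds iff the interpretation of $R'$ is $\fo{\N,+,<}$-definable. The idea is to encode ``every section of $R$ is $\fo{\N,+,<}$-definable'' by substituting the sections of $R$ for $R'$ in $\mucF{d-1}$. Concretely, for each $i\in[d-1]$, fix a variable $c$ not occurring in $\mucF{d-1}$, rename the bound variables of $\mucF{d-1}$ so that none of them is $c$, and let $\mucF{d-1}^{i}$ be obtained from $\mucF{d-1}$ by replacing every atom $R'(t_0,\dots,t_{d-2})$ by $R(t_0,\dots,t_{i-1},c,t_i,\dots,t_{d-2})$. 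A routine substitution lemma, proved by induction on the structure of $\mucF{d-1}$, then shows that for every $\set{\N,+,<,R}$-structure $\s$ the formula $\mucF{d-1}^{i}$, with free variable $c$, defines in $\s$ exactly the set of $n\in\N$ such that $\secti{R^{\s}}{i}{n}$ is $\fo{\N,+,<}$-definable (indeed freezing the $i$-th argument of $R$ to $c$ realises precisely the section $\secti{R^{\s}}{i}{c}$ as the interpretation of $R'$). Consequently,
\[
  \mucF{d} \ :=\ \Bigl(\bigwedge_{i=0}^{d-1}\forall c.\ \mucF{d-1}^{i}\Bigr)\ \land\ \lambda_d
\]
asserts exactly that all sections of $R$ are $\fo{\N,+,<}$-definable and that Property~\eqref{muc-local} holds, which by Theorem~\ref{theo-muchnik-original} is equivalent to $R^{\s}$ being $\fo{\N,+,<}$-definable; and $\mucF{d}$ has no free variables, so it is a genuine $\fo{\N,+,<,R}$-sentence.

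Since Theorem~\ref{theo-muchnik-original} is available, there is no genuinely hard step. The only point demanding care is the substitution that turns the $(d-1)$-ary sentence $\mucF{d-1}$ into an assertion about the $d$-ary $R$: one must avoid capturing the parameter variable $c$, check that $\mucF{d-1}$ carries no stray free variables, and verify the substitution lemma relating satisfaction of $\mucF{d-1}^{i}$ in $\s[c/n]$ to satisfaction of $\mucF{d-1}$ in the structure whose $R'$ is interpreted as $\secti{R^{\s}}{i}{n}$. Once this bookkeeping is in place, correctness of $\mucF{d}$ is immediate from Theorem~\ref{theo-muchnik-original}.
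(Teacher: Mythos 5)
Your proposal is correct, but it cannot be compared with ``the paper's own proof'' because the paper offers none: Theorem~\ref{much-orig-formula} is imported as a black box, cited directly as \cite[Theorem 2]{muchnik}. What you have done is reconstruct a proof from the other ingredient the paper does state, namely Theorem~\ref{theo-muchnik-original}, and the reconstruction is sound. Condition~\eqref{muc-local} is a first-order property of $(s,k,t,\tu c)$ once the innermost assertion is read as ``the pair $(\tu c,k)$ is $s$-shiftable'' and delegated to $\varsigma_d$ of Lemma~\ref{lem:s-shiftable} (that reading is the intended one --- compare the negated form in Corollary~\ref{theo-muchnik}), so your sentence $\lambda_d$ captures it exactly; condition~\eqref{muc-recur} is handled by substituting $R(t_0,\dots,t_{i-1},c,t_i,\dots,t_{d-2})$ for $R'(t_0,\dots,t_{d-2})$ in the inductively obtained $\mucF{d-1}$ and universally quantifying the parameter $c$. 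This is precisely the substitution idiom the paper itself uses later, in the proof of Theorem~\ref{prop:michaux}, to apply $\mucF{d-1}$ to the sections of $R$, so your construction is consistent with how the author manipulates these formulas; the substitution lemma you flag (satisfaction of $\mucF{d-1}^{i}$ in $\s[c/n]$ versus satisfaction of $\mucF{d-1}$ in the structure interpreting $R'$ as $\secti{R^{\s}}{i}{n}$) is the only bookkeeping needed and is routine. What your route buys is self-containment: given Theorem~\ref{theo-muchnik-original}, Theorem~\ref{much-orig-formula} need not be cited separately, since the whole point of Muchnik's characterization is that both of its clauses are first-order expressible in $\fo{\N,+,<,R}$, and your induction on $d$ makes that explicit.
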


A corollary of Theorem \ref{theo-muchnik-original} is now given.
\begin{corollary}\label{theo-muchnik}
  Let $d\in\N^{>0}$ and $R\subseteq\N^{d}$. If $R$ is not $\fo{\N,+,<}$-definable, then one of
  the two following statements hold:
  \begin{enumerate}[(a)]
  \item\label{muc-ori-recur} the dimension is at least 2 and a section
    of $R$ is not $\fo{\N,+,<}$-definable or
  \item\label{muc-ori-local} for every $s\in \N$, there exists
    $k(R,s)\in\N$ such that for every $t\in\N$ there exists
    $\tu{c}(R,s,t)\in\N^{d}$ with $t\le\min(\tu{c}(R,s,t))$ such that
    the \notShiftable{R^\s}{\tu{c}(R,s,t)}{k(R,s)}{s}.
  \end{enumerate}
\end{corollary}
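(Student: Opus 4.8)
The plan is to obtain the corollary directly from Theorem~\ref{theo-muchnik-original} by contraposition, the only additional work being a routine manipulation of the quantifiers occurring in Property~\eqref{muc-local}. No new combinatorial or logical ingredient is needed.

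First I would assume that $R$ is not $\fo{\N,+,<}$-definable, i.e.\ that Property~(1) of Theorem~\ref{theo-muchnik-original} fails. By the equivalence $(1)\iff(2)$ of that theorem, Property~(2) must then fail as well. Since Property~(2) is the conjunction of \eqref{muc-recur} and \eqref{muc-local}, at least one of these two sub-properties fails, and I would split into the two corresponding cases.

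In the first case, \eqref{muc-recur} fails. As \eqref{muc-recur} is the implication ``if $d\ge 2$ then every section of $R$ is $\fo{\N,+,<}$-definable'', its negation is exactly ``$d\ge 2$ and some section of $R$ is not $\fo{\N,+,<}$-definable'', which is statement~\eqref{muc-ori-recur} of the corollary. In the second case, \eqref{muc-local} fails. Property~\eqref{muc-local} has the quantifier shape $\exists s\,\forall k\,\exists t\,\forall \tu c\,(\dots)$, where the matrix asserts that if $t\le\min(\tu c)$ then the pair $(\tu c,k)$ is $s$-shiftable in $R$; its negation therefore reads: for every $s\in\N$ there is a $k\in\N$ such that for every $t\in\N$ there is a $\tu c\in\N^{d}$ with $t\le\min(\tu c)$ for which $(\tu c,k)$ is \emph{not} $s$-shiftable in $R$. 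Naming, for each $s$, such a $k$ by $k(R,s)$, and, for each $s$ and $t$, such a $\tu c$ by $\tu c(R,s,t)$, yields statement~\eqref{muc-ori-local}.

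Since one of the two cases necessarily occurs, the corollary follows. The only point worth a comment is the passage from the negated nested-quantifier statement to the explicit functions $k(R,s)$ and $\tu c(R,s,t)$: as the witnesses range over $\N$ and $\N^{d}$, one may always choose the least one (for the natural order on $\N$, respectively the lexicographic order on $\N^{d}$), so no choice principle is invoked. I expect no genuine obstacle here, since all the mathematical content already lies in Theorem~\ref{theo-muchnik-original} and the proof amounts to bookkeeping.
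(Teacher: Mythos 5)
Your proof is correct and is exactly the argument the paper intends: the corollary is stated without proof as an immediate consequence of Theorem~\ref{theo-muchnik-original}, obtained by contraposing the equivalence and pushing the negation through the conjunction of \eqref{muc-recur} and \eqref{muc-local} and through the quantifier prefix of the local property. Your remark about choosing least witnesses (so that $k(R,s)$ and $\tu c(R,s,t)$ are well defined without any choice principle) matches the paper's own convention, stated right after the corollary, of taking the lexicographically minimal values.
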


Let $\s$ be a $\set{\N,+,<,R}$-structure such that $R^{\s}$ is not
$\fo{\N,+,<}$-definable. Let $s,t\in\N$. There may exist many values
$k(R^{\s},s)$ and $\tu{c}(R^{\s},s,t)$ for wich Property
\eqref{muc-ori-local} holds. In this paper, it is always assumed that
$k(R^{\s},s)$\index{kr@$k(R^{\s},s)$ - the least radius $k$ according
  to Muchnik's theorem} and
$\tu{c}(R^{\s},s,t)$\index{cr@$\tu{c}(R^{\s},s,t)$ - the least cube
  $c$ according to Muchnik's theorem} represent the lexicographically
minimal such values.

Two examples of applications of this corollary
are now given.  \newcommand{\forExDeux}{x_{1}\equiv1\mod 2, x_{0}\le
  x_{1}^{2}}
\begin{example}\label{ex:muc}
  Let $R_{0}=\set{(x_{0}^{2},x_{1})\mid x_{0},x_{1}\in\N}$.  In this
  case, Property \eqref{muc-ori-recur} of Corollary
  \ref{theo-muchnik} clearly holds, for the section $x_{1}=0$.
\end{example}
\begin{example}\label{ex:muc2}
  Let $R_{1}=\set{(x_{0},x_{1})\in\N^{2}\mid \forExDeux}$. This set is
  pictured in Figure \ref{fig:ex:muc}.
    \begin{figure}[h]
      \begin{tikzpicture}[darkstyle/.style={circle,draw,minimum
          size=1},scale=.42]
        % les bases
        \draw [ -latex] (0,0) -- (0,6);%x_0 axis
        \draw [ -latex] (0,0) -- (26,0);%x_1 axis
      
        \node at (-1.1,5) {$x_1$}; \node at (25,-1.1) {$x_0$};

        % Les points
        \foreach \xo in {1,3,...,5}{
          \pgfmathsetmacro{\xiMax}{\xo *\xo }
          \foreach \xi in {0,1,...,\xiMax} {
            \draw [fill=black]  (\xi,\xo) circle  (2.5pt);
          }
        }  
      
        % Les cube
        \foreach \up in {1,3,...,5}{
          \pgfmathsetmacro{\left}{\up *\up }
          \pgfmathsetmacro{\right}{\left+1 }
          \pgfmathsetmacro{\down}{\up-1 }
          \draw (\left,\up) -- (\right,\up) -- (\right,\down) -- (\left,\down) -- (\left,\up);
        }        
        
        % indices x_0 vertical
        \foreach \xo in {0,1,...,6}{
          \node at (-.5,\xo) {$\xo$};
        }
        % indices x_1 horizontal
        \foreach \xi in {0,1,...,26}{
          \node at (\xi,-.4) {$\xi$};
        }
      \end{tikzpicture}
      \caption{$R_{1}=\set{(x_{0},x_{1})\in\N^{2}\mid \forExDeux}$ from
        Example \ref{ex:muc}}
      \label{fig:ex:muc}
    \end{figure} 
    In this case, Property \eqref{muc-ori-recur} does not hold and
    Property \eqref{muc-ori-local} holds. For every $s\in\N^{>0}$, it
    suffices to consider cubes of size 1, that is $k(R,s)=1$. Indeed,
    there is an infinite number of $\tu x\in\N^{d}$ such that
    $\cube{\tu{x}}{1}R$ equal to $\set{(0,1)}$ and such that the
    \notShiftable{R^\s}{\tu x}{1}{s}. For small values
    of $s$, some of those cubes are shown in Figure \ref{fig:ex:muc}.

    More precisely, for every $t\in\N$ and for every $s\in\N^{>0}$,
    $\tu{c}(R^{\s},s,t)$ equals $((n+1)^{2},n)$ where $n$ is the least
    integer greater or equal to $\max\left(t,\fracInline{s}{4}\right)$.
    
    % The first possible values for the pair $((c+1)^{2},c)$ are
    % represented by the bottom-left of squares of Figure
    % \ref{fig:ex:muc}. It should be noted that
    % $\cube{((c+1)^{2},c)}1R=\set{(0,1)}$. And the pair
    % $(n,n')\in\N^{2}$ which is the closest from $((c+1)^{2},c)$ and
    % such that $\cube{(n,n')}1R=\set{(0,1)}$, is $((c-1)^{2},c-2)$. And
    % $\max(((c+1)^{2},c)-((c-1)^{2},c-2))=\max(4c,2)>
    % 4(\frac{s}{4})=s$. Hence, $((c+1)^{2},c)$ is not $s$-shiftable.
\end{example}

The following lemmas allow to define the functions $k$ and $\tu{c}$ as
first-order formulas which do not depend of the interpretation of $R$.
\begin{lemma}\label{lem:k}
  Let $d>0$, and $R$ be a $d$-ary relation symbol.  Let $\s$ be a
  $\set{\N,+,<,R}$-structure. There exists a $\fo{\N,+,<,R}$-formula
  $\kappa_{d}(s;K)$ which states that $K= k(R^{\s},s)$ if
  $k(R^{\s},s)$ is correctly defined.
\index{kd@$\kappa_{d}(s;K)$ - the formula which
  defines $k(R,s)$}
\end{lemma}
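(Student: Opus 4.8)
The plan is to translate the definition of $k(R^{\s},s)$ underlying Property \eqref{muc-ori-local} of Corollary \ref{theo-muchnik} directly into first-order logic, using the formula $\varsigma_{d}$ of Lemma \ref{lem:s-shiftable} to express non-$s$-shiftability and the minimization of Notation \ref{min-exists} to select the least admissible radius.

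First I would introduce an auxiliary $\fo{\N,+,<,R}$-formula $\lambda_{d}(s,k)$ asserting that a radius $k$ witnesses Property \eqref{muc-ori-local} for the bound $s$, namely
\[
\lambda_{d}(s,k):=\forall t.\,\exists \tu c.\,\Bigl(\bigwedge_{i=0}^{d-1}t\le c_{i}\Bigr)\land\neg\varsigma_{d}(s,k,\tu c),
\]
where the conjunction spells out $t\le\min(\tu c)$ (here $\min$ is a meta-level abbreviation, not a symbol of the logic, so it must be unfolded). By Lemma \ref{lem:s-shiftable}, for any $\set{\N,+,<,R}$-structure $\s$ and $m,n\in\N$, one has $\s[s/m][k/n]\models\lambda_{d}$ precisely when, for every threshold $t$, there is a corner $\tu c$ with $t\le\min(\tu c)$ whose cube of radius $n$ is not $m$-shiftable in $R^{\s}$; that is, precisely when $n$ is one of the admissible values for $k(R^{\s},m)$ in Corollary \ref{theo-muchnik}.

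Then I would define $\kappa_{d}(s;K)$ to state that $K$ is the least such value,
\[
\kappa_{d}(s;K):=\lambda_{d}(s,K)\land\forall K'.\,\bigl(K'<K\implies\neg\lambda_{d}(s,K')\bigr),
\]
which is the instance of $\minTu{K}{\lambda_{d}(s,K)}$ obtained by viewing $K$ as a one-element tuple. Since the paper fixes, just after Corollary \ref{theo-muchnik}, the convention that $k(R^{\s},s)$ denotes the least admissible radius, it follows that whenever $k(R^{\s},m)$ is correctly defined we have $\s[s/m][K/n]\models\kappa_{d}$ if and only if $n=k(R^{\s},m)$, and this $n$ is then unique; when $k(R^{\s},m)$ is undefined, no $K$ satisfies $\kappa_{d}(s;K)$, which matches the hedged statement of the lemma and is consistent with Notation \ref{fun-for}.

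I do not anticipate a genuine obstacle: all quantification is over $\N$, and the combinatorial substance — shifting of cubes and the quantifier alternation $\forall s\,\exists k\,\forall t\,\exists\tu c$ — is already packaged into Lemma \ref{lem:s-shiftable} and the statement of Corollary \ref{theo-muchnik}. The only points needing a line of care are unfolding $t\le\min(\tu c)$ into a conjunction and checking that the minimality clause forces uniqueness of $K$, so that the function-style notation $\kappa_{d}(s;K)$ is justified.
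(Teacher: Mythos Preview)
Your proposal is correct and essentially identical to the paper's own proof: the paper defines $\kappa_{d}(s;K):=\minTu{K}{\forall t.\exists \tu{c}.\,t\le\min(\tu{c})\land\neg\varsigma_{d}(s,K,\tu{c})}$, which is exactly your $\minTu{K}{\lambda_{d}(s,K)}$ with the auxiliary $\lambda_{d}$ inlined. The only cosmetic differences are that you name the inner formula and explicitly unfold both $t\le\min(\tu c)$ and the minimization, which the paper leaves in abbreviated form.
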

\begin{proof}
  The formula $\kappa_{d}(s;K)$ states that $K=k(R^{\s},s)$.  The
  integer $k(R^{\s},s)$ is the minimal integer such that, for all
  $t\in\N$, there is a $d$-tuple $\tu{c}\in\N^{d}$ with
  $t\le\min(\tu{c})$ such that the \notShiftable{R^\s}{\tu
    c}{k(R^\s,s)}{s}.  Let:
  \begin{equation}
    \kappa_{d}(s;K):=
    \minTu{K}{\forall t.\exists \tu{c}.t\le\min(\tu{c})\land
      \neg\varsigma_{d}(s,K,\tu{c})},
  \end{equation}
  where $\varsigma_{d}(s,K,\tu c)$ is the formula of
  Lemma \ref{lem:s-shiftable} which states that the \shiftable{R^\s}{\tu
    c}{K}{s}. Recall that the notation $\minTu{K}{\phi}$ is introduced
  in Notation \ref{min-exists}.
\end{proof}  
\begin{lemma}\label{lem:c}
  \index{gd@$\gamma_{d}(s,t;\tu{c})$ - the formula which defines
    $c(R,s,t)$} Let $d>0$, and $R$ be a $d$-ary relation symbol.  Let
  $\s$ be a $\set{\N,+,<,R}$-structure. There exists a
  $\fo{\N,+,<,R}$-formula $\gamma_{d}(s,t;\tu{C})$ which states that
  $\tu{C}=\tu{c}(R^{\s},s,t)$ if $\tu{c}(R^{\s},s,t)$ is defined.
\end{lemma}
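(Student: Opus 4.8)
The plan is to reuse the construction of $\kappa_{d}$ from Lemma~\ref{lem:k}, wrapping it in one more layer of minimization. Recall from Corollary~\ref{theo-muchnik}, Property~\eqref{muc-ori-local}, together with the convention fixed immediately after that corollary, that whenever $\tu{c}(R^{\s},s,t)$ is defined it is the lexicographically least $d$-tuple $\tu{C}\in\N^{d}$ such that $t\le\min(\tu{C})$ and the \notShiftable{R^{\s}}{\tu{C}}{k(R^{\s},s)}{s}. Each ingredient of this description is already available: the value $k(R^{\s},s)$ is named by the formula $\kappa_{d}(s;K)$ of Lemma~\ref{lem:k}; non-$s$-shiftability of the pair $(\tu{C},K)$ is expressed by $\neg\varsigma_{d}(s,K,\tu{C})$ through Lemma~\ref{lem:s-shiftable}; and ``$\tu{C}$ lexicographically least such that $\dots$'' is exactly what $\minTu{\tu{C}}{\cdot}$ of Notation~\ref{min-exists} provides.

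Concretely, I would set
\begin{equation*}
  \gamma_{d}(s,t;\tu{C}):=\exists K.\ \kappa_{d}(s;K)\land
  \minTu{\tu{C}}{t\le\min(\tu{C})\land\neg\varsigma_{d}(s,K,\tu{C})},
\end{equation*}
with ``$t\le\min(\tu{C})$'' used, as in the proof of Lemma~\ref{lem:k}, as an abbreviation for $\bigwedge_{i=0}^{d-1}t\le C_{i}$. The correctness argument then runs as follows. Assume $\tu{c}(R^{\s},s,t)$ is defined; then $k(R^{\s},s)$ is defined as well, so by Lemma~\ref{lem:k} the subformula $\kappa_{d}(s;K)$ is satisfied precisely by $K=k(R^{\s},s)$. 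For that $K$, Lemma~\ref{lem:s-shiftable} gives that $t\le\min(\tu{C})\land\neg\varsigma_{d}(s,K,\tu{C})$ holds exactly for the tuples $\tu{C}$ witnessing Property~\eqref{muc-ori-local}, and the wrapper $\minTu{\tu{C}}{\cdot}$ selects the lexicographically least such tuple, which by definition is $\tu{c}(R^{\s},s,t)$. Hence $\gamma_{d}(s,t;\tu{C})$ holds iff $\tu{C}=\tu{c}(R^{\s},s,t)$.

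I do not expect a genuine obstacle; the construction is a routine combination of the preceding lemmas, and the only points that need care are bookkeeping. One should verify that the lexicographic order hard-wired into $\minTu{\tu{C}}{\cdot}$ is the one used to single out $\tu{c}(R^{\s},s,t)$, and that the hypothesis ``if $\tu{c}(R^{\s},s,t)$ is defined'' is invoked only to ensure that a witness $\tu{C}$ exists: when no such tuple exists (in particular when $k(R^{\s},s)$ is itself undefined, so that $\exists K.\,\kappa_{d}(s;K)$ is false), the formula $\gamma_{d}(s,t;\tu{C})$ simply defines the empty set, which is consistent with the statement. Finally, as already noted in the proof of Lemma~\ref{lem:eq-cube}, the fact that the shift vector in $\varsigma_{d}$ ranges over $\Z^{d}$ rather than $\N^{d}$ is handled inside $\varsigma_{d}$ itself, so nothing further is required here.
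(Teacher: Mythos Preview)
Your proposal is correct and follows essentially the same approach as the paper: both define $\gamma_{d}$ as the lexicographic minimum, via $\minTu{\tu{C}}{\cdot}$, over tuples $\tu{C}$ with $t\le\min(\tu{C})$ that are not $s$-shiftable for the radius $k(R^{\s},s)$ supplied by $\kappa_{d}$. The only cosmetic difference is that you introduce $K$ with an explicit $\exists K.\,\kappa_{d}(s;K)$, whereas the paper writes $\varsigma_{d}(s,\kappa_{d}(s),\tu{C})$ using the function notation of Notation~\ref{fun-for}; under the hypothesis that $\tu{c}(R^{\s},s,t)$ is defined (hence $\kappa_{d}(s;K)$ has a unique witness), the two formulations are equivalent.
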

\begin{proof}
  The formula $\gamma_{d}(s,t;\tu{C})$ states that $\tu{C}$ is
  lexicographically minimal such that:
  \begin{itemize}
  \item $t\le\min(\tu{C})$ and
  \item the \notShiftable{R^\s}{\tu C}{k(s)}{s}.
  \end{itemize}
  Let
  \begin{equation}\label{eq:gamma}
    \gamma_{d}(s,t;\tu{C}):=\minTu{\tu{C}}{\bigwedge_{i=0}^{d-1}t\le C_{i}\land\neg\varsigma_{d}(s,\kappa_{d}(s),\tu{C})},
  \end{equation} where $\varsigma_{d}(s,\kappa_{d}(s),\tu{C})$ is the
  formula of Lemma \ref{lem:s-shiftable}.
\end{proof}

\subsection{Dimension $d=1$}
\label{sec:theo-d=1}
Two theorems dealing with set of integers and the logic $\fo{\N,+,<}$
are recalled in this section.
\begin{theorem}[\cite{Presburger}]\label{relun}
  A set $R\subseteq\N$ is $\fo{\N,+,<}$-definable if and only if it is ultimately
  periodic.
\end{theorem}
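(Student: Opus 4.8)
I would prove the two implications separately, the ``if'' direction by exhibiting an explicit formula and the ``only if'' direction via Presburger's quantifier elimination, and then close with a one‑line lemma that the ultimately periodic sets form a Boolean algebra.

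\textbf{Ultimately periodic implies definable.} Suppose $R$ is ultimately $p$-periodic with threshold $t$. By ultimate periodicity, for every $n\ge t$ whether $n\in R$ depends only on $n\bmod p$; put $E:=\set{\,n\bmod p\mid n\ge t,\ n\in R\,}\subseteq[p-1]$. Since every fixed natural number and multiplication by a fixed constant are $\fo{\N,+,<}$-definable (as already used freely in the notations of the previous sections), $R$ is defined by
\[
  \nu(x)\;:=\;\Bigl(\bigvee_{\substack{c\in R\\ c<t}} x\doteq c\Bigr)
  \;\lor\;
  \Bigl(\neg(x<t)\;\land\;\bigvee_{r\in E}\exists y.\ x\doteq p\,y+r\Bigr),
\]
which is a genuine $\fo{\N,+,<}$-formula.

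\textbf{Definable implies ultimately periodic.} By Presburger's quantifier‑elimination theorem (\cite{Presburger}), every $\fo{\N,+,<}$-formula is equivalent to a quantifier‑free formula over addition, order, and the congruence relations $\equiv_m$ (which are themselves $\fo{\N,+,<}$-definable, as used above). Specialised to a single free variable $x$, such a formula is a Boolean combination of atoms, each equivalent to one of $x\doteq c$, $x<c$, $c<x$, or $x\equiv r\pmod m$ for fixed constants $c,r,m$. Each such atom defines an ultimately periodic subset of $\N$: the three order/equality atoms are eventually constant, hence ultimately $1$-periodic with threshold at most $c+1$, and $\set{\,x\mid x\equiv r\pmod m\,}$ is $m$-periodic with threshold $0$. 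To conclude it remains to observe that the ultimately periodic subsets of $\N$ are closed under Boolean operations: if $A$ is ultimately $a$-periodic beyond $t_A$ and $B$ is ultimately $b$-periodic beyond $t_B$, then $\N\setminus A$ is ultimately $a$-periodic beyond $t_A$, while $A\cap B$ and $A\cup B$ are ultimately $\lcm(a,b)$-periodic beyond $\max(t_A,t_B)$ — all immediate from the definition of ultimate $m$-periodicity. Applying this inductively to the Boolean combination produced by quantifier elimination finishes the proof.

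\textbf{Main obstacle.} The only delicate point is the precise form of the quantifier‑elimination statement: $\fo{\N,+,<}$ proper eliminates quantifiers only after the predicates $\equiv_m$ are adjoined to the vocabulary, so one must invoke that extended version (noting the $\equiv_m$ are back‑definable). Alternatively, one can sidestep quantifier elimination entirely by using the identification of $\fo{\N,+,<}$-definable sets with semilinear sets (\cite{ginsburg}): a semilinear subset of $\N$ is a finite union of sets of the shape $\set{\,a+bm\mid m\in\N\,}$, each of which is ultimately $\max(b,1)$-periodic, after which the same closure argument concludes.
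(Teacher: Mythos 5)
The paper states this theorem as a recalled classical result, citing Presburger, and gives no proof of its own, so there is nothing to compare your argument against; judged on its own terms, your proof is correct and is the standard one. Both directions are sound: the explicit formula for the ``if'' direction works (and your observation that $\exists y.\,x\doteq py+r$ with $y\in\N$ correctly captures $x\equiv r\pmod p$ for $0\le r<p$ is right, since any such $x$ is automatically at least $r$), and the ``only if'' direction via quantifier elimination in the vocabulary enriched with the congruence predicates, followed by closure of ultimately periodic sets under Boolean operations with period $\lcm(a,b)$ and threshold $\max(t_A,t_B)$, is the classical route. Two trivial imprecisions worth tidying: an atomic formula in one variable is of the form $a_1x+b_1\mathbin{\square}a_2x+b_2$, so the congruence atoms reduce to $ax\equiv b\pmod m$, which is in general a finite \emph{union} of residue classes modulo $m$ rather than a single one (still $m$-periodic with threshold $0$, so nothing breaks), and you should include the degenerate outcomes ``true'' and ``false'' among the possible reductions of the order and equality atoms. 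Your alternative via semilinear sets has the same flavour of imprecision (a one-dimensional linear set may have several generators, so it is not literally of the single shape $\set{a+bm\mid m\in\N}$, though it is still ultimately periodic); either route closes the argument.
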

\begin{theorem}[{\cite[Theorem 3.7]{michaux}}]\label{theo:exp}
  Let $R$ be a unary relation symbol. There exists a
  $\fo{\N,+,<,R}$-formula $\delta(x)$ such that, for all
  $\set{\N,+,<,R}$-structure $\s$, if $R^{\s}$ is not ultimately
  periodic, then $\delta(x)^{\s}$ is expanding.
\end{theorem}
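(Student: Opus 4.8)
The plan is to reduce to the local analysis recalled in Section~\ref{sec:some-res}. By Theorem~\ref{relun}, for a unary relation ``not ultimately periodic'' is the same as ``not $\fo{\N,+,<}$-definable'', so Corollary~\ref{theo-muchnik} applies to $R^{\s}$; since the dimension is $1$ there are no sections and statement~\eqref{muc-ori-recur} is vacuous, hence~\eqref{muc-ori-local} holds. Thus for every $s$ the least radius $k(R^{\s},s)$ is defined, for every $t$ the least witness $c(R^{\s},s,t)\geq t$ is defined, and the pair $(c(R^{\s},s,t),k(R^{\s},s))$ is not $s$-shiftable in $R^{\s}$. The point is that, by Lemmas~\ref{lem:k} and~\ref{lem:c} (with $d=1$), these two quantities are computed by formulas $\kappa_{1}(s;K)$ and $\gamma_{1}(s,t;C)$ that do not mention the interpretation of $R$, so any formula I build out of them will be uniform in $R$.

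I would then define $\delta(x)$ so that it collects, for every scale $s$, one position that is not $s$-shiftable but is pushed far to the right, built from $\kappa_{1}$, $\gamma_{1}$ and the function and if-then-else notations of Section~\ref{sec:def}. A first candidate is ``there is $s$ such that $x$ is the least position that is not $s$-shiftable, for the radius $k(R^{\s},s)$, and lies beyond $s+c(R^{\s},s,0)$''; a more robust variant iterates $\gamma_{1}$ a bounded number of times, each time skipping at least $s$ further ahead, so that within each fixed scale the selected positions are at least $s$ apart. In any such version the position selected at scale $s$ exceeds $s$, so $\delta(x)^{\s}$ is infinite whenever $R^{\s}$ is not ultimately periodic (when $R^{\s}$ is ultimately periodic the theorem asks nothing).

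The real work is to prove that $\delta(x)^{\s}$ is expanding, i.e.\ that for every $B$ only finitely many of its elements lie within $B$ of one another. The two ingredients I would use are: (i) monotonicity --- $k(R^{\s},\cdot)$ is non-decreasing, a pair that is not $s'$-shiftable is not $s$-shiftable for every $s\leq s'$, and (for a fixed $s$) a pair $(c,k)$ that is not $s$-shiftable stays not $s$-shiftable when $k$ is increased; and (ii) a pigeonhole bound --- among any $2^{k+1}+1$ consecutive positions, two have equal radius-$k$ cube, so as soon as the scale $s$ is large compared with $2^{k(R^{\s},s)+1}$ every sufficiently long block of consecutive positions contains an $s$-shiftable one, which stops the witnesses coming from very large scales from accumulating near those of a fixed scale. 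Combining (i) and (ii) one should show that a cluster of elements of $\delta(x)^{\s}$ inside an interval of length $B$ can only come from scales bounded by a function of $B$, whence there are finitely many such clusters and the gaps of $\delta(x)^{\s}$ are unbounded. I expect this verification to be the main obstacle: the delicate point is that the radius $k(R^{\s},s)$ furnished by Muchnik's theorem grows with $s$ while being not $s$-shiftable only transfers to \emph{larger} radii, so making the monotonicities line up may force the construction of $\delta$ to be refined (for instance by keeping, within each scale, a sufficiently sparse subenumeration of the positions that are not $s$-shiftable).
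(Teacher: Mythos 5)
First, a point of comparison: the paper does not actually prove this statement --- it is imported wholesale from Michaux and Villemaire, the only accompanying remark being that their Theorem~3.7 already provides a formula that does not depend on the interpretation of $R$. So you are attempting a proof the paper deliberately outsources, and your attempt does not close. The setup is fine: for $d=1$ Corollary~\ref{theo-muchnik} does force Property~\eqref{muc-ori-local}, and Lemmas~\ref{lem:k} and~\ref{lem:c} make $k(R^{\s},s)$ and $\tu{c}(R^{\s},s,t)$ uniformly definable. But the expansion argument, which you correctly identify as the real work, has a genuine hole exactly where you flag it. Your pigeonhole step (ii) only yields an $s$-shiftable position in every block of $2^{k+1}+1$ consecutive integers under the hypothesis $s\ge 2^{k(R^{\s},s)+1}$, and nothing bounds $k(R^{\s},s)$ in terms of $s$. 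For a set whose characteristic word is aperiodic and uniformly recurrent (a Sturmian set, say), every factor of length $k+1$ reoccurs within a bounded window $R(k+1)$, so a position can fail to be $s$-shiftable at radius $k$ only if $R(k+1)>s$; this forces $k(R^{\s},s)\to\infty$ with $s$, and then $2^{k(R^{\s},s)+1}$ dwarfs $s$ and step (ii) says nothing. The cross-scale clustering problem is likewise unresolved: non-$(s{+}1)$-shiftability at radius $k(R^{\s},s{+}1)$ transfers to \emph{larger} radii but not down to radius $k(R^{\s},s)$, so nothing prevents the witnesses selected at scales $s$ and $s+1$ from being adjacent, and your first candidate $\delta$ may well define a set with bounded gaps.

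The way the paper itself obtains separation within a fixed scale (in the proof of Theorem~\ref{prop:michaux}) is not a pigeonhole over all positions but a restriction to positions carrying one fixed cube $I(R^{\s},s)$ that recurs infinitely often: two such positions have equal cubes, so if one of them is not $s$-shiftable they must be more than $s$ apart. The scales are then glued together by the $\lcm$ construction of Lemma~\ref{lem:ult-per}, whose output is expanding because $q_{n+1}\ge 2q_{n}$ infinitely often. Note, however, that even this machinery produces an expanding set only in the branch where every $N(R^{\s},s)$ is ultimately periodic; in the other branch it produces a set that is not ultimately periodic and has all gaps greater than $s$, which can still have bounded gaps. That residual case is precisely what Michaux--Villemaire's Theorem~3.7 is invoked to handle, and it is what your proposal would still need to supply; as it stands, the claim that $\delta(x)^{\s}$ is expanding is not established.
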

Note that, formally, Theorem \ref{theo:exp} is an easy consequence of
\cite[Theorem 3.7]{michaux}. Indeed, \cite[Theorem 3.7]{michaux}
states that there are two sets, which are $\fo{\N,+,<,R}$-definable,
and one of them is expanding. And furthermore, the definition of those
two sets does not depend on the interpretation of $R$.
\section{The theorem}\label{mva}
In this section, we prove the main theorem of this paper. It is similar to
\cite[Theorem 5.1]{michaux}, which is now recalled.
\begin{theorem*}[{\cite[Theorem
    5.1]{michaux}}]\label{theo:michaux-orig}
  Let $d\in\N^{>0}$ and $R\subseteq \N^{d}$. Then $R$ is
  $\fo{\N,+,<}$-definable if and only if every subset of \N{} which is
  $\fo{\N,+,<,R}$-definable is ultimately periodic.
\end{theorem*}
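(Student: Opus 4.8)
The plan is to prove the two directions separately, the easy direction being "$R$ definable $\Rightarrow$ every $\fo{\N,+,<,R}$-definable subset of $\N$ is ultimately periodic," and the hard direction being the contrapositive of its converse. For the easy direction, if $R$ is $\fo{\N,+,<}$-definable, say $R = \phi(\tu x)^{\s}$ for some $\fo{\N,+,<}$-formula $\phi$, then in any $\fo{\N,+,<,R}$-formula I can syntactically replace every occurrence of the atom $R(t_0,\dots,t_{d-1})$ by $\phi(t_0,\dots,t_{d-1})$, producing an equivalent $\fo{\N,+,<}$-formula. Hence every $\fo{\N,+,<,R}$-definable set is $\fo{\N,+,<}$-definable, and by Theorem \ref{relun} every such subset of $\N$ is ultimately periodic.

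For the hard direction I would prove the contrapositive: if $R$ is not $\fo{\N,+,<}$-definable, then some $\fo{\N,+,<,R}$-definable subset of $\N$ is not ultimately periodic. I would argue by induction on the dimension $d$. In the base case $d=1$, $R$ itself is a subset of $\N$ that is $\fo{\N,+,<,R}$-definable (via the formula $R(x)$) and, by Theorem \ref{relun}, not ultimately periodic, so we are done. For the inductive step with $d \ge 2$, apply Corollary \ref{theo-muchnik}. If alternative \eqref{muc-ori-recur} holds, then some section $\secti{R}{i}{c}$ is a non-$\fo{\N,+,<}$-definable subset of $\N^{d-1}$; since that section is $\fo{\N,+,<,R}$-definable (fix the $i$th coordinate to the constant $c$), the induction hypothesis applied to it yields a non-ultimately-periodic subset of $\N$ that is $\fo{\N,+,<,R}$-definable, as required. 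The remaining case is alternative \eqref{muc-ori-local}, the genuinely combinatorial one.

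In the local case I would use the formulas $\kappa_d$ and $\gamma_d$ from Lemmas \ref{lem:k} and \ref{lem:c} to pin down, inside the logic, the witnesses $k(R^{\s},s)$ and $\tu c(R^{\s},s,t)$ whose existence Property \eqref{muc-ori-local} guarantees. The idea is to build a $\fo{\N,+,<,R}$-formula defining a unary set $S$ which, for each successive threshold, records (say) the value $\min(\tu c(R^{\s},s,t))$ or $\norm{\tu c(R^{\s},s,t)}$ for a well-chosen sequence of parameters $s$ and $t$; because for a fixed $s$ with $k=k(R^{\s},s)$ the bad cube must appear with $\min(\tu c) \ge t$ for every $t$, these witness values grow without bound, and by choosing the parameters so that the recorded values are forced to have unboundedly large gaps I obtain an expanding — hence, by Definition \ref{def:ult-per} and Theorem \ref{relun}, non-ultimately-periodic — subset of $\N$. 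To obtain expansion rather than merely non-periodicity I would feed the intermediate non-periodic set through Theorem \ref{theo:exp}: that theorem's formula $\delta(x)$, applied to any $\fo{\N,+,<,R}$-definable non-ultimately-periodic unary set, yields an expanding set, and it does so uniformly in $R$, which is exactly what is needed for the stronger uniform version pursued later in the paper.

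The main obstacle is the local case: one must extract from the mere \emph{existence} statement in Property \eqref{muc-ori-local} an actual first-order \emph{definition} of an unbounded-gap set, and in particular one must verify that the quantities $k(R^{\s},s)$ and $\tu c(R^{\s},s,t)$ really are first-order accessible (this is what Lemmas \ref{lem:k} and \ref{lem:c} provide) and that the sequence of witnesses one reads off genuinely has unbounded gaps rather than merely being infinite. The bookkeeping of which parameters $s,t$ to iterate over, and the verification that the resulting set is well-defined as a function (so that Notation \ref{fun-for} applies), is where the real care is required; everything else is a routine translation of the combinatorial statement of Muchnik's theorem into the syntax set up in Sections \ref{sec:def} and \ref{sec:some-res}.
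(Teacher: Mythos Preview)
Your overall architecture---the easy direction via syntactic substitution, induction on $d$, and the case split coming from Corollary~\ref{theo-muchnik}---matches the paper (which does not prove the cited theorem separately but establishes the stronger uniform Theorem~\ref{prop:michaux}, whose proof specialises to this one). The genuine gap is in the local case, and it is not a matter of bookkeeping. Simply recording $\min(\tu c(R^{\s},s,t))$ or $\norm{\tu c(R^{\s},s,t)}$ for varying $t$ gives you an \emph{unbounded} set, but nothing forces large gaps: the tuples $\tu c(R^{\s},s,t)$ could in principle move by one coordinate at a time as $t$ increases, producing a set with bounded successive differences and hence possibly ultimately periodic. Your proposal never supplies the mechanism that forces gaps.

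The paper's proof provides three ideas you are missing. First, a pigeonhole step: for fixed $s$ the cubes $\cube{\tu c(R^{\s},s,t)}{k(R^{\s},s)}{R^{\s}}$ range over subsets of $[k(R^{\s},s)]^{d}$, so some cube type $I(R^{\s},s)$ recurs for infinitely many $t$; restrict to those $t$ (the set $X(R^{\s},s)$). Second, the actual gap argument: for $t<t'$ in $X(R^{\s},s)$ the cubes at $\tu c(R^{\s},s,t)$ and $\tu c(R^{\s},s,t')$ are equal, so $\tu c(R^{\s},s,t')-\tu c(R^{\s},s,t)$ is a shift of the pair $(\tu c(R^{\s},s,t),k(R^{\s},s))$; since that pair is \emph{not} $s$-shiftable, $\max\abs{\tu c(R^{\s},s,t')-\tu c(R^{\s},s,t)}>s$, whence (after arranging via Lemma~\ref{lem:croissant} that $\tu c$ is increasing on the index set) $\norm{\tu c(R^{\s},s,t')}-\norm{\tu c(R^{\s},s,t)}>s$. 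This shows only that $N(R^{\s},s)$ has all gaps $>s$, so its minimal period exceeds $s$; it does \emph{not} yet give a single non-periodic set. Third, the final dichotomy: either some $N(R^{\s},s)$ is already non-periodic, or all are periodic with minimal period $p(R^{\s},s)>s$, and then Lemma~\ref{lem:ult-per} (the $\lcm$ construction) turns the family $(N(R^{\s},s))_{s}$ into one non-periodic set. Your sketch reaches none of these three steps; ``choosing the parameters so that the recorded values are forced to have unboundedly large gaps'' is exactly the content that needs to be supplied, not assumed.
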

This theorem can be equivalently stated as follows.
\begin{corollary*}
  Let $d\in\N^{>0}$. Let $\s$ be a $\set{\N,+,<,R}$-structure such
  that $R^{\s}\subseteq\N^{d}$ which is not
  $\fo{\N,+,<}$-definable. There exists a $\fo{\N,+,<,R}$-formula
  $\nu_{R^{\s}}(x)$ such that $\nu_{R^{\s}}(x)^{\s}$ is not
  $\fo{\N,+,<}$-definable, i.e., not ultimately-periodic.
\end{corollary*}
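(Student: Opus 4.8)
The statement is precisely the contrapositive of the non-trivial implication of \cite[Theorem 5.1]{michaux}, so it follows at once from that theorem; the aim here, as in the rest of the paper, is to produce $\nu_{R^\s}(x)$ by an \emph{explicit} construction. I sketch such a construction, by induction on the dimension~$d$.

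If $d=1$, then $R^\s\subseteq\N$ is not $\fo{\N,+,<}$-definable, hence not ultimately periodic by Theorem~\ref{relun}; by Theorem~\ref{theo:exp} the set $\delta(x)^\s$ is then expanding, in particular not ultimately periodic, and one takes $\nu_{R^\s}(x):=\delta(x)$. Let now $d\ge 2$ and assume the claim in dimension $d-1$. Applying Corollary~\ref{theo-muchnik} to $R^\s$, either some section of $R^\s$ is not $\fo{\N,+,<}$-definable (case~\eqref{muc-ori-recur}) or the local property fails (case~\eqref{muc-ori-local}).

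In case~\eqref{muc-ori-recur}, observe first that, for each fixed $i\in[d-1]$, the property that $\secti{R^\s}{i}{c}$ is $\fo{\N,+,<}$-definable, with $c$ as a free variable, is expressed by the $\fo{\N,+,<,R}$-formula obtained from $\mucF{d-1}$ (Theorem~\ref{much-orig-formula}) by replacing every atom $R'(y_0,\dots,y_{d-2})$ with $R(y_0,\dots,y_{i-1},c,y_i,\dots,y_{d-2})$. A finite disjunction over $i\in[d-1]$ together with the $\min$-notation of Notation~\ref{min-exists} then yields a $\fo{\N,+,<,R}$-formula selecting the lexicographically least pair $(i,c)$ whose section is not $\fo{\N,+,<}$-definable, and in this case such a pair exists. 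For that pair, $\secti{R^\s}{i}{c}$ is a relation of dimension $d-1$ that is not $\fo{\N,+,<}$-definable, so the induction hypothesis supplies a formula $\nu'(x)$ over $\{+,<,R'\}$, with $R'$ of arity $d-1$, defining a set that is not ultimately periodic. One lets $\nu_{R^\s}(x)$ select that least $(i,c)$ and assert $\nu'(x)$ with each atom $R'(\tu z)$ rewritten as $R(z_0,\dots,z_{i-1},c,z_i,\dots,z_{d-2})$; its interpretation in $\s$ is exactly the set provided by the induction hypothesis, which is not ultimately periodic.

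Case~\eqref{muc-ori-local} is the crux. Here $k(R^\s,s)$ is defined for every $s$ and $\tu c(R^\s,s,t)$ for every $s,t$, and by Lemmas~\ref{lem:k} and~\ref{lem:c} the maps $s\mapsto k(R^\s,s)$ and $(s,t)\mapsto\tu c(R^\s,s,t)$ are $\fo{\N,+,<,R}$-definable through $\kappa_d$ and $\gamma_d$. The plan is to build from $\kappa_d$ and $\gamma_d$ a $\fo{\N,+,<,R}$-definable function $h:\N\to\N$ — for instance a well-chosen coordinate of $(s,t)\mapsto\tu c(R^\s,s,t)$, or $s\mapsto k(R^\s,s)$ itself when that (non-decreasing) function is unbounded — whose range is expanding, and to put $\nu_{R^\s}(x):=\exists s.\,x=h(s)$, using Notation~\ref{ite} to combine this with the previous case. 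The hard point is that the hypothesis that $R^\s$ is \emph{not} $\fo{\N,+,<}$-definable must genuinely be used: the obvious invariants of the witnessing cubes — their minimum, their norm, a fixed coordinate, or the bare values $k(R^\s,s)$ — need not individually have non-ultimately-periodic range, so the definition of $h$ has to exploit the failure of shiftability recorded by $\gamma_d$. The benchmark is the argument underlying \cite[Theorem 5.1]{michaux}: were every $\fo{\N,+,<,R}$-definable subset of $\N$ ultimately periodic, one could deduce from the ultimate periodicity of the ranges of all these definable functions that conditions~\eqref{muc-recur} and~\eqref{muc-local} of Theorem~\ref{theo-muchnik-original} hold for $R^\s$, hence that $R^\s$ is $\fo{\N,+,<}$-definable, a contradiction. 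Converting this proof by contradiction into the explicit choice of $h$ — and, ideally, one depending only on $d$ — is the main obstacle, and is the heart of the paper.
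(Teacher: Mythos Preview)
Your opening sentence already proves the Corollary and matches the paper's treatment exactly: the paper presents this statement as a mere reformulation of \cite[Theorem 5.1]{michaux}, with no separate argument.

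Everything after that sentence is not a proof of the Corollary but a partial sketch of the stronger Theorem~\ref{prop:michaux}, and there you explicitly leave case~\eqref{muc-ori-local} unfinished. Since you flag the missing step as ``the main obstacle,'' here is what the paper actually does. It does not look for a single function $h$ with expanding range. Rather, for each fixed $s$ it (i)~thins the $t$-axis via Lemma~\ref{lem:croissant} to a definable infinite $T(R^{\s},s)$ on which $t\mapsto\tu c(R^{\s},s,t)$ is coordinatewise strictly increasing; (ii)~by pigeonhole picks a cube $I(R^{\s},s)\subseteq[k(R^{\s},s)]^{d}$ occurring for infinitely many $t\in T(R^{\s},s)$, and calls that infinite set of $t$'s $X(R^{\s},s)$; (iii)~sets $N(R^{\s},s):=\{\norm{\tu c(R^{\s},s,t)}:t\in X(R^{\s},s)\}$. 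For $t<t'$ in $X(R^{\s},s)$ the cubes at $\tu c(R^{\s},s,t)$ and $\tu c(R^{\s},s,t')$ coincide while the former position is not $s$-shiftable, so some coordinate of the difference exceeds $s$; by~(i) all coordinates of the difference are positive, hence the norm gap exceeds $s$. Thus either some $N(R^{\s},s)$ is already not ultimately periodic, or all are, with minimal period $>s$, and then Lemma~\ref{lem:ult-per} applied to the definable family $\{(s,n):n\in N(R^{\s},s)\}$ produces the desired set. The ingredients your sketch was missing are the monotonisation of Lemma~\ref{lem:croissant}, the pigeonhole on cube-types, and the ``lcm of periods'' device of Lemma~\ref{lem:ult-per}; a single definable $h$ does not suffice.
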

Resuming Examples \ref{ex:muc} and
\ref{ex:muc2}, two examples of such sets are given.
\begin{example}\label{ex:michaux}
  Let $\V=\set{\N,+,<,R}$ where $R$ is a binary relational symbol.
 Let $\s^{0}$ be the $\V$-structure such that
    \begin{equation*}
      R^{\s^{0}}:=\set{(x_{0}^{2},x_{1})\mid x_{0},x_{1}\in\N}.
    \end{equation*}
    In this case, it suffices to consider the section $x_{1}=0$. Then
    the formula $\nu(x)=R(x,0)$, defines the set
    $\set{n^{2}\in\N\mid n\in\N}$ which is not ultimately periodic.
  \end{example}
\begin{example}\label{ex:michaux2}
    Let $\s^{1}$ be the $\V$-structure such that
    \begin{equation*}
      R^{\s^{1}}:=\set{(x_{0},x_{1})\in\N^{2}\mid \forExDeux}.
    \end{equation*}
    The set $R^{\s^{1}}$ is pictured in Figure \ref{fig:ex:muc}.  Each
    section of the form $x_{0}=c$ is ultimately periodic with period 2
    and each section of the form $x_{1}=c$ is finite.

    Let $X$ be the set of pairs $(x_{0},x_{1})$ such that
    $\cube{(x_{0},x_{1})}1{R^{\s^{1}}}=\set{(0,1)}$.  The first of
    those elements are pictured as the lower-left corner of the
    squares of Figure \ref{fig:ex:muc}. Then it can be shown that
    $X=\set{((c+1)^{2},c)\mid c\in 2\N}$. Hence the set $N$ of norms
    of elements of $X$ is $\set{c^{2}+3c+1\mid c\in2\N}$. Note that
    the set $X$ is not ultimately periodic. The set $N$ is defined by:
   \begin{equation*}
    \arraycolsep=0.5pt
     \begin{array}{llll}
       \nu(x):=
       \exists x_{0},x_{1}.
       x_{0}+x_{1}\doteq x&\land
       \neg R(x_{0},x_{1})\land
       R(x_{0},x_{1}+1)\\&\land
       \neg R(x_{0}+1,x_{1})\land
       \neg R(x_{0}+1,x_{1}+1).
     \end{array}
 \end{equation*}
\end{example}

The main theorem of this paper is now stated.
\begin{theorem}\label{prop:michaux}
  Let $d\in\N^{>0}$. Let $R$ be a $d$-ary relation symbol.  There
  exists a $\fo{\N,+,<,R}$-formula
  $\nu_{d}(x)$\index{nudx@$\nu_{d}(x)$ - The formula which defines a
    set which is not ultimately periodic} such that, for every
  $\{R,+,<\}$-structure ${\s}$, if $R^{\s}$ is not
  $\fo{\N,+,<}$-definable, then $\nu_{d}(x)^\s$ is not
  ultimately-periodic, hence not $\fo{\N,+,<}$-definable.
\end{theorem}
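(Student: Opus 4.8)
The plan is to build $\nu_d(x)$ by induction on the dimension $d$, following the ``central idea'' of \cite[Theorem 5.1]{michaux} but carefully tracking where the interpretation of $R$ is used and replacing every such use by one of the uniform formulas constructed in Section~\ref{sec:some-res}. The base case $d=1$ is already handled: if $R^{\s}\subseteq\N$ is not $\fo{\N,+,<}$-definable, then by Theorem~\ref{relun} it is not ultimately periodic, so Theorem~\ref{theo:exp} gives a formula $\delta(x)$, independent of the interpretation of $R$, defining an expanding --- hence not ultimately periodic --- set. So set $\nu_1(x):=\delta(x)$.

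For the inductive step, suppose $d\ge 2$ and $R^{\s}\subseteq\N^{d}$ is not $\fo{\N,+,<}$-definable. By Corollary~\ref{theo-muchnik}, one of two cases holds. In case~\eqref{muc-ori-recur}, some section $\secti{R^{\s}}{i}{c}$ is not $\fo{\N,+,<}$-definable; I would use the $\min$-notation (Notation~\ref{min-exists}) to pick the lexicographically least such pair $(i,c)$ uniformly --- expressing ``$\secti{R}{i}{c}$ is not $\fo{\N,+,<}$-definable'' requires a $\fo{\N,+,<,R}$-formula, which is exactly what Theorem~\ref{much-orig-formula} provides (applied to the $(d-1)$-ary relation obtained by fixing $x_i=c$) --- and then feed this section into $\nu_{d-1}$, using the ``if-then-else'' notation (Notation~\ref{ite}) to branch on which case of the corollary we are in. In case~\eqref{muc-ori-local}, for every $s$ there is a radius $k(R^{\s},s)$ and, for every $t$, a corner $\tu{c}(R^{\s},s,t)$ with $t\le\min(\tu c)$ such that the pair $(\tu c, k)$ is not $s$-shiftable; the formulas $\kappa_d$ and $\gamma_d$ from Lemmas~\ref{lem:k} and~\ref{lem:c} define $k$ and $\tu c$ uniformly. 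The idea is then to let $\nu_d(x)$ collect, say for a fixed small $s$ (or ranging $s$ over $[x]$), the norms $\norm{\tu c(R^{\s},s,t)}$ as $t$ ranges over $\N$; since $t\le\min(\tu c(R^{\s},s,t))\le\norm{\tu c(R^{\s},s,t)}$ and $t$ is unbounded, this set is infinite, and one argues it is not ultimately periodic --- the key point being that the corners $\tu c$ are forced to witness genuinely new non-shiftable behaviour, so the set of norms cannot have eventually bounded gaps. Here one has to be a little careful: a direct ``expanding'' conclusion is cleanest if one arranges, as in Example~\ref{ex:muc2}, that the successive witnessing corners are spread out; varying $s$ is what produces unboundedly large cubes and hence, chasing through Muchnik's local condition, unboundedly large jumps.

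The main obstacle is the second case: showing that the uniform formula one writes down in case~\eqref{muc-ori-local} actually defines a non-ultimately-periodic (ideally expanding) set, \emph{without} knowing anything about $R$ beyond the bare failure of the Muchnik local condition. One must extract, purely from ``for all $s$, $k(R^{\s},s)$ exists'' together with the minimality conventions on $k$ and $\tu c$, a quantitative statement forcing the norms $\norm{\tu c(R^{\s},s,t)}$ --- or the set of sizes $k(R^{\s},s)$ themselves --- to grow unboundedly, and to do so in a way that survives the eventual-periodicity test; this is precisely the place where \cite{michaux} uses proof by contradiction, and the real work is to make the argument constructive and uniform. A secondary technical point is that the recursive branch changes the dimension, so the final formula is a nested ``if-then-else'' of depth $d$ whose well-definedness (each branch genuinely defines a set independent of $R^{\s}$) must be checked against Notations~\ref{min-exists} and~\ref{ite} and the definability-of-definability formula $\mucF{d}$ of Theorem~\ref{much-orig-formula}.
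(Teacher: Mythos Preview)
Your overall architecture matches the paper's: induction on $d$, branching via Corollary~\ref{theo-muchnik} using Notation~\ref{ite}, detecting non-definable sections with $\mucF{d-1}$, and in case~\eqref{muc-ori-local} working with the uniformly definable corners $\tu c(R^{\s},s,t)$. (A minor difference: the paper takes $\nu_1(x):=R(x)$, not $\delta(x)$, since non-ultimate-periodicity is all that is claimed.) The genuine gap is in case~\eqref{muc-ori-local}, where your plan of ``collecting the norms $\norm{\tu c(R^{\s},s,t)}$'' and hoping the result is not ultimately periodic does not work without three ingredients you are missing.

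First, nothing prevents distinct $t,t'$ from producing corners whose norms are close or even equal; the paper first passes through Lemma~\ref{lem:croissant} to restrict $t$ to an infinite set $T(R^{\s},s)$ on which $t\mapsto\tu c(R^{\s},s,t)$ is componentwise strictly increasing. Second---and this is the key idea you are missing---non-$s$-shiftability of $(\tu c(R^{\s},s,t),k(R^{\s},s))$ says nothing about the distance between \emph{different} corners unless their cubes coincide; the paper uses pigeonhole on the finitely many patterns in $[k(R^{\s},s)]^{d}$ to find one pattern $I(R^{\s},s)$ occurring for infinitely many $t\in T(R^{\s},s)$, and only then does non-$s$-shiftability force $\max|\tu c(R^{\s},s,t')-\tu c(R^{\s},s,t)|>s$, hence $\norm{\tu c(R^{\s},s,t')}-\norm{\tu c(R^{\s},s,t)}>s$. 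Third, even with gaps exceeding $s$, the resulting set $N(R^{\s},s)$ may still be ultimately periodic (with minimal period $>s$); your remark that ``varying $s$'' should help is correct in spirit but not a proof. The paper closes this with a further uniform case split: if some $N(R^{\s},s)$ is non-periodic, take the least such $s$; otherwise every $N(R^{\s},s)$ has minimal period $p(R^{\s},s)>s$, and Lemma~\ref{lem:ult-per} (applied to the uniformly definable binary relation $\{(s,n)\mid n\in N(R^{\s},s)\}$) produces a non-ultimately-periodic set from this family. Without the same-cube restriction and without Lemma~\ref{lem:ult-per}, your case~\eqref{muc-ori-local} argument does not go through.
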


In order to prove this Theorem, two lemmas are first proven. The
first lemma allows to reduce the problem of generating a set which is
not ultimately periodic to a simpler case.
\begin{lemma}\label{lem:ult-per}
  Let $R$ be a binary relation and $\V=\set{+,R}$.  There exists a
  $\fo{\N,<,R}$-formula $\epsilon(x)$ such that, for every
  $\V$-structure $\s$, if
  % \begin{equation}\text{\parbox{0.85\textwidth}{$R_{n}=\set{m\in{}N\mid R^{\s}(n,m)}$ is
  %     ultimately periodic with minimal period $p_{n}\in \N^{>0}$}}\end{equation}
  % \begin{equation}\label{eq:R_n}
  %   \text{
  %    % \parbox{\.85\textwidth}{
  %       {$R_{n}=\set{m\in{}N\mid R^{\s}(n,m)}$ is
  %    ultimately periodic with minimal period $p_{n}\in \N^{>0}$}
  %     %}
  %   }
  % \end{equation}
  \statement{eq:R_n}{for all $n\in\N$, $R_{n}:=\set{m\in{}N\mid R^{\s}(n,m)}$ is
    ultimately periodic with minimal period $p_{n}\in \N^{>0}$,}
  and if :
  \begin{equation}\label{eq:P->infty}
    \lim_{n\to+\infty}p_{n}=+\infty,
  \end{equation} then
  $\epsilon(x)^{\s}$\index{epsilon@$\epsilon(x)^{\s}$ - the formula
    which defines a set which is not ultimately periodic, from a sequence
    of ultimately periodic sets.}  defines a set $E(R^{\s})$ which is
  not ultimately periodic.
\end{lemma}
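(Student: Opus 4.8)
The plan is to build $\epsilon$ so that the set it defines ``reads off'' the rows $R_n$ into a single subset of $\N$ in a way that converts the growth of the periods $p_n$ into non-periodicity, and to prove non-periodicity by a Fine--Wilf-type argument.

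First I would dispose of the degenerate rows. A finite set, a cofinite set, and the empty set are all ultimately $1$-periodic, so $p_n=1$ for each such $n$; since $p_n\to\infty$, only finitely many rows $R_n$ are of this kind. Hence, for all but finitely many $n$, the row $R_n$ is infinite and co-infinite, eventually $p_n$-periodic with $p_n\ge 2$, and one period of its eventual pattern (read past its threshold) is a primitive word. The consequence I want to use is the following: if some window of length at least $p_n+q$ of the eventually periodic part of $R_n$ is $q$-periodic for an integer $0<q<p_n$, then by the Fine--Wilf theorem the eventual pattern of $R_n$ is $\gcd(p_n,q)$-periodic, contradicting the minimality of $p_n$; so such windows are never $q$-periodic.

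Next I would define $E=\epsilon(x)^{\s}$. The idea is to lay out along $\N$ a sequence of pairwise disjoint intervals $I_0<I_1<I_2<\cdots$ — the ``windows'' — whose endpoints are $\fo{\N,<,R}$-definable, and to let $E$ agree, inside each $I_n$, with a fixed translate of the eventually periodic pattern of $R_n$ (and be empty between windows). If the windows can be chosen so that $|I_n|\to\infty$ and moreover $|I_n|-p_n\to\infty$, then $E$ is not ultimately periodic: were $E$ ultimately $q$-periodic with threshold $t$, pick $n$ large enough that $\min I_n>t$, that $p_n>q$, and that $|I_n|\ge p_n+q$; inside $I_n$ the set $E$ would be simultaneously $q$-periodic and equal to a length-$\ge p_n+q$ window of the $p_n$-periodic pattern of $R_n$, which the previous paragraph excludes. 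This yields the conclusion.

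The main obstacle — and the bulk of the work — is carrying out the previous paragraph \emph{uniformly} and within $\fo{\N,<,R}$. Without the addition function one cannot directly speak of ``a period of $R_n$'', nor reserve a block of prescribed length $\approx 2p_n$, so the windows and the copying operation must be engineered out of the order relation and $R$ alone, and they must still behave correctly when $R$ carries ``misleading'' low-complexity structure (a fixed column lying in every row, rows that are sparse at the index scale, a trivial short prefix, etc.), which defeats every naive guess for $\epsilon$. The plan is therefore to first \emph{normalise} the rows — replacing $R_n$ by an $\fo{\N,<,R}$-definable variant (such as its set of change-points, or an iterate thereof) whose small-scale behaviour is controlled while its minimal period is unchanged — and only then to perform the windowing, choosing the window boundaries as a sparse $\fo{\N,<,R}$-definable ``scaffold'' extracted from $R$ itself and argued, using $p_n\to\infty$, to have the required growth $|I_n|-p_n\to\infty$. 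Proving that such a scaffold exists and that the resulting $E$ has unbounded, steadily growing gaps is the step I expect to require the most care.
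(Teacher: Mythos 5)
There is a genuine gap: your text is a strategy sketch, not a proof. You never actually produce the formula $\epsilon(x)$, and you yourself flag that the decisive steps --- constructing a definable sequence of windows $I_n$ with $\lvert I_n\rvert-p_n\to\infty$, the ``normalisation'' of the rows, and the copying of each row's eventual pattern into its window --- are still to be done. These are not routine details. The window layout must be extracted from $R$ alone in first-order logic, must be provably infinite and of growing length for \emph{every} structure satisfying the hypotheses, and must survive adversarial choices of $R$; nothing in the proposal establishes that such a scaffold exists. The Fine--Wilf part of the argument is fine as far as it goes, but it only pays off once the construction it is applied to exists. (A side remark: you handicap yourself by assuming addition is unavailable. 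The lemma's hypothesis gives $\V=\set{+,R}$, and the ``$\fo{\N,<,R}$'' in the statement is evidently a slip --- the intended vocabulary includes $+$, so one may freely write $N+p$ inside formulas. Several of the obstacles you describe evaporate once this is noticed.)

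For comparison, the paper's proof avoids windowing entirely. It sets $q_n:=\lcm\set{p_i\mid i\in[n]}$ and takes $E(R^{\s})=\set{q_n\mid n\in\N}$. Non-periodicity is immediate from a doubling argument: $q_{n+1}=\lcm(q_n,p_{n+1})$ is either equal to $q_n$ or at least $2q_n$, and since $p_n\to\infty$ forces $q_n\to\infty$, the second case occurs infinitely often, so $E(R^{\s})$ is infinite with unbounded gaps. Definability is equally direct: one writes a formula $\alpha(n,p)$ asserting that $R_n$ is ultimately $p$-periodic (this uses $N+p$, hence addition), and $q_n$ is the \emph{least} $p$ such that $\alpha(i,p)$ holds for all $i\le n$ --- no $\lcm$ computation and no copying of patterns is needed. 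If you want to salvage your route you must supply the missing scaffold construction; but the lcm-of-periods set is the much shorter path to the same conclusion.
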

Two examples of sets $R$ satisfying the hypothesis of this lemma are now
given. 
\begin{example}
  Let $\pi_{n}$ be the $n$-th prime integer. Let
  \begin{equation*}
    R=\set{(n,m)\in \N^{2}\mid \pi_{n}\mbox{ divides }m}.
  \end{equation*}
  Then $R_{n}=\set{m\in \N\mid \pi_{n}\mbox{ divides } m}$ is the set
  of multiple of $\pi_{n}$, its minimal periodicity $p_{n}$ is
  $\pi_{n}$. Thus
  $\lim_{n\to+\infty}p_{n}=\lim_{n\to+\infty}\pi_{n}=\infty$. Let
  $q_{n}=\Pi_{i=0}^{n}\pi_{i}$, it is the least positive integer such
  that $q\in R_{i}$ for all $i<n$.  The distance between $q_{n}$ and
  $q_{n+1}$ is greater than $\pi_{n+1}$, thus is not bounded. Hence
  the set $S=\set{q_{n}\mid n\in\N}$ is not ultimately periodic.  Note
  that the property $y=q_{n}$ is defined by the formula:
    \begin{equation*}
      \rho(n;y):=\minTu{y}{0<y\land \forall i\le n. R(i,y)}.
  \end{equation*}
  Recall that the notation $\minTu{y}{\phi}$ is introduced in Notation
  \eqref{min-exists}.  Finally, the set $S$ is defined by:
  \begin{equation*}
    \exists n.x\doteq\rho(n).
  \end{equation*}
\end{example}
A second example is now given, which is a variation of the first
example.
\begin{example}\label{ex:prime-dec} Let
  $R=\set{(n,m)\in \N^{2}\mid \pi_{n}\mbox{ divides }m+n^2, m>n}$. It is
  represented in Figure \ref{fig:R-prime-dec}.
  \begin{figure}
    \centering
    \begin{tikzpicture}
      \pgfmathsetmacro\maxAx{70}
      \begin{axis}[
        width=\textwidth, height=200pt,
        xmin=0, ymin=-1,
        xmax=\maxAx, ymax=5,
        xlabel=$m$,
        ylabel=$n$]
        \foreach \i in {0,2,...,\maxAx} {\addplot[draw=none,mark=o]coordinates{(\i,0)};}
        \foreach \i in {2,5,...,\maxAx} {\addplot[draw=none,mark=o]coordinates{(\i,1)};}
        \foreach \i in {6,11,...,\maxAx} {\addplot[draw=none,mark=o]coordinates{(\i,2)};}
        \foreach \i in {5,12,...,\maxAx} {\addplot[draw=none,mark=o]coordinates{(\i,3)};}
        \foreach \i in {6,17,...,\maxAx} {\addplot[draw=none,mark=o]coordinates{(\i,4)};}
        \foreach \i in {14,27,...,\maxAx} {\addplot[draw=none,mark=o]coordinates{(\i,5)};}
        \addplot[dashed]coordinates{(0,0.2) (60,0.2)};
        \addplot[dashed]coordinates{(2,1.2) (62,1.2)};
        \addplot[dashed]coordinates{(6,2.2) (66,2.2)};
        % (2,0) (4,0) (6,0)(8,0)(10,0)(12,0)(14,0)(16,0)(18,0)(20,0)%2
          %                       %divise m+0
          % (2,1) (5,1) (8,1) (11,1) (14,1) (17,1) (20,1)%3 divise m+1
          % (1,2) (6,2) (11,2) (16,2) %5 divise m+4
          % (5,3)(12,3)(19,3) %7 divises m+9, i.e. m+2
          % (6,4)(17,4)%11 divises m+16, i.e. m+5
          % (1,5)(14,5) %13 divises m+25, i.e. m-1
%        };
        % \addplot[dashed] coordinates{(3,1)  (\xmax,1)};
        % \addplot[dotted] coordinates{(3,1)  (3,0)};
        % \addplot[dashed] coordinates{(4,4)  (\xmax,4)};
        % \addplot[dotted] coordinates{(4,4)  (4,0)};
        % \addplot[dashed] coordinates{(7,5)  (\xmax,5)};
        % \addplot[dotted] coordinates{(7,5)  (7,0)};
        % \node[draw=black,circle] at (3,0){};
        % \node[draw=black,circle] at (4,0){};
        % \node[draw=black,circle] at (7,0){};
      \end{axis}
    \end{tikzpicture}
    \caption{The set $R$ of Example \ref{ex:prime-dec}.}
    \label{fig:R-prime-dec}
  \end{figure}
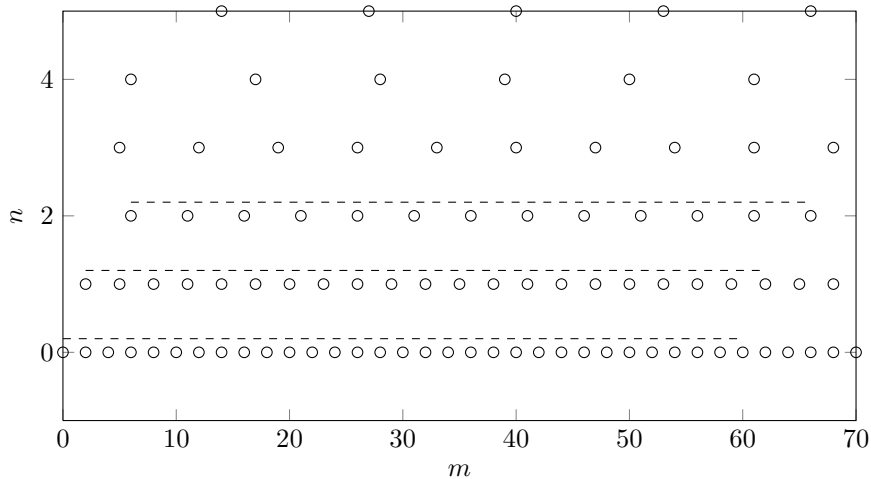
  Let $R_{n}=\set{m\in\N\mid R(n,m)}$. It is equal to
  $\left(m\Z-n^2\right)\cap\N$ and its minimal period $p_{n}$ is also
  $\pi_{n}$.  A formula $\alpha(n,p)$ is now introduced, which states
  that $R_{n}$ is ultimately $p$-periodic.  Let:
  \begin{equation*}
    \alpha(n,p):=\exists t.\forall N>t. \left[R(n,N)\iff R(n,N+p)\right],
  \end{equation*}
  where $t$ represents the threshold, as defined in Definition
  \ref{def:ult-per}.  Let $q_{n}=\Pi_{i=0}^{n}\pi_{i}$, it is equal to
  $\lcm\set{\pi_{i}\mid i\in[n]}$, hence to
  $\lcm\set{p_{i}\mid i\in[n]}$. Thus $q_{n}$ can be defined by:
  \begin{equation*}
    \rho(n;q_{n}):=\minTu{q_{n}}{\forall i\le n. \alpha(n,q_{n})},
  \end{equation*}
  For example, $\pi_{0}=2$, $\pi_{1}=3$ and $\pi_{2}=5$, hence
  $q_{2}=60$. The dashed lines of Figure \ref{fig:R-prime-dec}, of
  length 60, illustrates the fact that the first three lines are
  ultimately $60$-periodic.  Finally, the set
  $S=\set{q_{n}\mid n\in\N}$ can still be represented as:
  \begin{equation*}
    \exists n.x\doteq\rho(n).
  \end{equation*}
\end{example}
Lemma \ref{lem:ult-per} is now proven.
\begin{proof}[Proof of Lemma \ref{lem:ult-per}]
  The set $E(R^{\s})$ is the set of least common multiples of the
  $p_{i}$'s, for $i\in[0,n]$ for $n\in\N$.  It must be shown that this
  set is not ultimately periodic and that it is
  $\fo{\N,<,R}$-definable. Let us first show that it is not ultimately
  periodic.  For any integer $n$, let:
  \begin{equation}\label{def:qn}
    q_{n}:=\lcm\set{p_{i}\mid i\in[n]}.
  \end{equation}
  Note that $E(R^{\s})=\set{q_{n}\mid n\in\N}$.  It follows from
  Definition \eqref{def:qn} of $q_{n}$ that $p_{n}\le q_{n}$, hence
  %\begin{equation}\label{eq:q-lim}
   $ \lim_{n\to+\infty}p_{n}\le \lim_{n\to+\infty}q_{n}$.
  %\end{equation}
   Since % By Statement \eqref{eq:q-lim}
  % $\lim_{n\to+\infty}p_{n}\le\lim_{n\to+\infty}q_{n}$ and
  furthermore by Hypothesis \eqref{eq:P->infty}
  $\lim_{n\to+\infty}p_{n}=+\infty$, thus % :
  % \begin{equation*}\label{eq:q-inft}
    $\lim_{n\to+\infty}q_{n}=+\infty$,
  %\end{equation*}
    hence: \statement{eq:inf-q-ne}{There exists infinitely many
      integers $n$ such that $q_{n+1}\ne q_{n}$.}  It follows from
    Definition \eqref{def:qn} of $q_{n}$ that, for all $n\in\N$:
  \begin{equation*}\label{eq:grow_q_eq}
    q_{n+1}=\lcm\set{p_{i}\mid i\in[n+1]}= \lcm(\lcm\set{p_{i}\mid i\in[n]},p_{n+1})=\lcm(q_{n},p_{n+1}).
  \end{equation*}
  Since $q_{n+1}=\lcm(q_{n},p_{n+1})$, for all $n\in\N$, %:
  %\statement{eq:grow_q}{
  $q_{n+1}$ is either $q_{n}$ or is greater than $2q_{n}$. %}
  Furthermore, by statement \eqref{eq:inf-q-ne}, there exists
  infinitely many integers $n$ such that $q_{n+1}\ne q_{n}$. It
  follows that
  %\statement{eq:q_n-inf}{
  there exists infinitely many integers $n$ such that
  $2q_{n}\le q_{n+1}$. %}
  It implies that the set $E(R^{\s})=\set{q_{n}\mid n\in\N}$ is
  infinite and the distance between two successive elements is not
  bounded. Hence $E(R^{\s})$ is not ultimately periodic.

  \paragraph{}
  It remains to logically define $E(R^{\s})$.  A formula $\alpha(n,p)$
  which states that $p$ is a periodicity of $R_{n}$ is first defined.
  Let:
  \begin{equation*}
    \alpha(n,p):= \exists t.\forall N. t<N\implies{\left[R(n,N)\iff R(n,N+p)\right]}, 
  \end{equation*}
  where $t$ represents the threshold, as defined in Definition
  \ref{def:ult-per}.  It should be noted that, for an arbitrary finite
  set $F$, the value of $\lcm(F)$ does not seem to be
  $\fo{\N,+,<,F}$-definable.  In this case, $q_{n}$ is equivalently
  defined as the least integer $p$ such that for all $i\le n$, the set
  $R_{i}$ is ultimately $p$-periodic. That is, $q_{n}$ is defined by
  the $\fo{\N,+,<,R}$-formula:
  \begin{equation*}
    \rho(n;q_{n}):=\minTu{q_{n}}{\forall i\le n. \alpha(n,q_{n})}.
  \end{equation*}
  Recall that the notation $\minTu{x}{\phi}$ is introduced in Notation
  \eqref{min-exists}.  Finally, the formula $\epsilon(x)$ which
  defines $E(R^{\s})$ is:
  \begin{equation*}
    \epsilon(x):=\exists n.x\doteq\rho(n).
  \end{equation*}
\end{proof}

The second lemma allows to transform a sequence of $d$ functions,
diverging to infinity, into a sequence of $d$ \emph{increasing}
functions diverging to infinity, by restricting the domain of the $d$
function to an infinite set of integers.
\begin{lemma}\label{lem:croissant}
  Let $d\in \N$, let $f_{0},\dots,f_{d-1}$ be unary function symbols
  and let $\V=\set{<,f_{0},\dots,f_{d-1}}$. There exists a
  $\fo{\N,\V}$-formula $\tau_{d}(t)$\index{tau@$\tau_{d}(t)$ - the
    formula which defines a domain to $d$ function such that those
    functions are increasing} such that, for every $\V$-structure
  $\s$, if:
  \statement{eq:hyp-f-incr}{$\lim_{t\to+\infty}f_{i}^{\s}(t)=+\infty$
    for all $i\in[d-1]$} then the set $\tau_{d}(t)^{\s}$ defines an
  infinite set $T\subseteq\N$ such that $\tu f$ is increasing of $T$.
\end{lemma}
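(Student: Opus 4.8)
The plan is to build $T$ by a $d$-stage refinement that handles one function at a time. A tempting one-shot attempt would be to take $T$ to be the set of \emph{simultaneous records}, that is, of those $t$ with $f_i(t')<f_i(t)$ for every $i\in[d-1]$ and every $t'<t$; such a set is first-order definable and carries $\tu f$ strictly increasingly, but it can be finite --- e.g.\ when the $f_i$ all diverge yet set their individual records at pairwise disjoint times. Other one-shot attempts fare no better: keeping only the records of $\min(\tu f)$ or of $\max(\tu f)$ does not prevent a single component from overshooting and later dropping back, and a greedy construction of $T$ amounts to iterating a ``greedy successor'' function, i.e.\ a transitive closure, which is not expressible in $\fo{\N,\V}$. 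So instead I would build a decreasing chain $T_{0}\supseteq T_{1}\supseteq\dots\supseteq T_{d-1}$ and set $T:=T_{d-1}$.

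Concretely, let $T_{0}:=\set{t\mid\forall t'<t,\ f_{0}(t')<f_{0}(t)}$ be the set of strict records of $f_{0}$. By Hypothesis \eqref{eq:hyp-f-incr} the running maximum of $f_{0}$ is non-decreasing and unbounded, hence strictly increases infinitely often, so $T_{0}$ is infinite, and $f_{0}$ is strictly increasing on $T_{0}$ by construction. Inductively, given an infinite $T_{i}$ on which $f_{0},\dots,f_{i}$ are all strictly increasing, I would put $T_{i+1}:=\set{t\mid t\in T_{i}\ \land\ \forall t'\,(t'\in T_{i}\land t'<t\implies f_{i+1}(t')<f_{i+1}(t))}$. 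The one nontrivial point is that $f_{i+1}$ restricted to the infinite set $T_{i}$ still tends to $+\infty$: enumerating $T_{i}$ in increasing order, its elements tend to $+\infty$, hence so do their $f_{i+1}$-images by Hypothesis \eqref{eq:hyp-f-incr}. So the running-maximum argument again makes $T_{i+1}$ infinite; it carries $f_{i+1}$ strictly increasingly by construction, and $f_{0},\dots,f_{i}$ strictly increasingly since $T_{i+1}\subseteq T_{i}$. After $d$ stages, $T_{d-1}$ is infinite and all of $f_{0},\dots,f_{d-1}$ --- equivalently $\tu f$ --- are strictly increasing on it.

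For the formula I would just unfold this recursion, which is legitimate because $d$ is a fixed parameter of the lemma. Set $\tau_{d}^{(0)}(t):=\forall t'.\,(t'<t\implies f_{0}(t')<f_{0}(t))$ and, for $0\le i<d-1$, $\tau_{d}^{(i+1)}(t):=\tau_{d}^{(i)}(t)\land\forall t'.\,((\tau_{d}^{(i)}(t')\land t'<t)\implies f_{i+1}(t')<f_{i+1}(t))$, and let $\tau_{d}(t):=\tau_{d}^{(d-1)}(t)$. This is a genuine $\fo{\N,\V}$-formula: the unfolding has depth $d$, and each $f_{i}(t')<f_{i}(t)$ is an atomic formula over $\V=\set{<,f_{0},\dots,f_{d-1}}$. (When $d=0$ both the hypothesis and the conclusion are vacuous, and one may take $\tau_{0}(t):=t\doteq t$.)

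The only delicate step is the observation in the second paragraph, namely that passing to an infinite subset preserves divergence of the remaining functions; this is exactly what keeps every $T_{i}$ infinite and justifies dealing with the functions one at a time, and I expect it to be the crux of the write-up. The first-order encoding itself should not be an obstacle, precisely because fixing $d$ converts an a priori recursive-looking construction into a finite nesting of quantifier blocks.
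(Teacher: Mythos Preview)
Your proof is correct and follows the same inductive architecture as the paper: treat one function at a time, refining an infinite set on which the previously handled functions are already strictly increasing, and encode the fixed-depth recursion as a nested $\fo{\N,\V}$-formula. The one genuine difference is the refinement step. You keep the \emph{left-records} of $f_{i+1}$ on $T_i$ (points beating all earlier values in $T_i$), whereas the paper keeps the \emph{right-minima}, i.e.\ those $t\in T'$ with $f_{d-1}(t)<f_{d-1}(t')$ for every later $t'\in T'$. Both choices force the new function to be strictly increasing on the refined set, but your variant buys a noticeably shorter infiniteness argument: the running maximum of a sequence tending to $+\infty$ breaks its own record infinitely often, so your ``delicate step'' is essentially one line, whereas the paper spends roughly a page proving, by a separate induction on cardinality, that its right-minimum set contains at least $i$ elements for every $i$.
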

Two  functions $f_{0}$ and $f_{1}$  are now given as example. It is
then explained how to define a
$\fo{\N,+,<,f_{0},f_{1}}$-formula  which defines an infinite set over
which both $f_{0}$ and $f_{1}$ are infinite.
\begin{example}
  Let $f_{0}$ be the function which sends the integer $n$ to
  $m\times n$, where $0<m\le 10$ and $n\equiv m\mod 10$.
  The first integers $f_{0}(n)$ are:
  \begin{equation*}
    \arraycolsep=2.8pt
    \begin{array}{l|llllllllllllllllllllllllllllllllllllllllllllllllllll}
      n&0&1&2&3&4&5&6&7&8&9&10&11&12&13&14&15&16&17&18&19&20\\
      \hline
      f_{0}(n)& 0&1&4&9&16&25&36&49&64&91&100&11&24&39&56&75&96&119&144&171&200
    \end{array}
  \end{equation*}
  Let us construct an infinite set $T_{0}$ over which $f_{0}$ is
  increasing. Clearly, $T_{0}$ can be taken to be $10\N+i$ for any
  $i\in[9]$.  Those ten sets $10\N+i$ are $\fo{\N,+}$-definable. Note
  that the set $10\N+1$ has the property that, for all $n\in 10\N+1$ and
  $n'\in \N$ if $n<n'$, then $f(n)<f(n')$.

  \paragraph{}
  A second example is now given.  Let $\mathcal P$ be the set of prime
  numbers. Then let $f_{1}(n)=\sum_{i\in\mathcal P\cap[n]}i$.
  The first integers $f_{0}(n)$ are:
  \begin{equation*}
    \arraycolsep=2.8pt
    \begin{array}{l|llllllllllllllllllllllllllllllllllllllllllllllllllll}
      n&0&1&2&3&4&5&6&7&8&9&10&11&12&13&14&15&16&17&18&19&20\\
      \hline
      f_{1}(n)& 0&0&2&5&5&10&10&17&17&17&17&28&28&41&41&41&41&58&58&77&77
    \end{array}
  \end{equation*}
  When
  $f_{1}$ is the only considered function, the set $T$ can be taken to
  be $\P$.  Note that this set is not
  $\fo{\N,+,<}$-definable. However, it can be defined as the set
  $\set{x\mid\forall y. x<y. f(x)\ne f(y)}$.

  Note that $f_{0}$ is not increasing on $T_{1}$. Indeed, 7 and 11
  belong to $T_{1}$ while:
  \begin{equation*}
    f(7)=7\times7=49>11\times1=f(11).
  \end{equation*}
  In order to consider simultaneously the functions $f_{0}$ and
  $f_{1}$, it suffices to replace the definition of $T_{1}$ by
  restricting the element to belong to $T_{0}$. That is, let $T$ be
  $\set{x\in T_{0}\mid\forall y. x<y. f(x)\ne f(y)}$.
\end{example}          
Lemma \ref{lem:croissant} is now proven.
\begin{proof}[Proof of Lemma \ref{lem:croissant}]
  The proof is by induction on $d$. For $d=0$, $T=\N$ can be
  chosen. Note that $\N$ is $\fo{\N,\V}$-defined by the formula
  $\tau_{0}$ equal to $\exists x. x\doteq x$.

  It is now assumed that $0<d$ and that the property holds for
  $d-1$. That is, it is assumed that there exists a
  $\fo{\N,<,f_{0},\dots,f_{d-2}}$-formula $\tau_{d-1}(t)$ such that
  for every $\s$ which satisfies the hypothesis, $\tau_{d-1}(t)^{\s}$
  is a set $T'\subseteq\N$ such that: \statement{eq:T'-inf}{ $T'$ is
    infinite} and \statement{eq:T'-incr}{the functions
    $f_{0},\dots,f_{d-2}$ are increasing on $T'$.}  Let $T$ be the set
  of elements of $T'$ such that the restriction of $f_{d-2}$ on
  $T'\setminus [t-1]$ is minimal on $t$. Geometrically speaking, $T$
  is the set of elements $t$ such that the graph of $f$ does not not
  cross the lines on the right of $(t,f(t))$. It is illustrated in
  Figure \ref{fig:f-T} with $T'=\N$. The dashed horizontal lines are
  starting at $(t,f(t))$ for $t\in T$. The half-circles on the $n$
  axis represents the elements of $T$.
  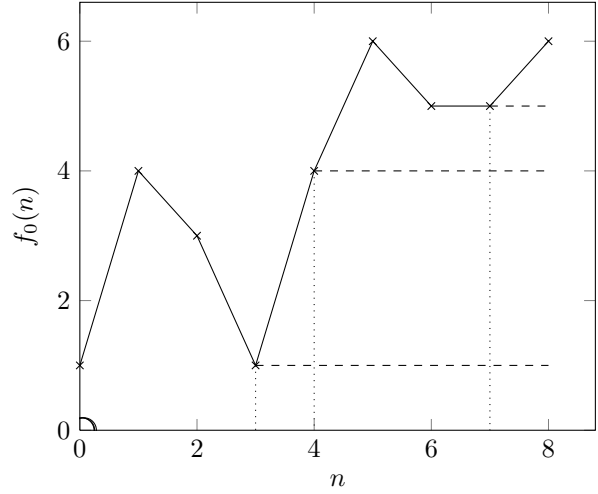
\begin{figure}
    \centering
    \begin{tikzpicture}
      \pgfmathsetmacro\xmax{8}
      \begin{axis}[
        xmin=0, ymin=0,
        xlabel=$n$,
        ylabel=$f_{0}(n)$]
        \addplot[mark=x] coordinates{
          (0, 1)
          (1,4)
          (2,3)
          (3,1)
          (4,4)
          (5,6)
          (6,5)
          (7,5)
          (8,6)
          % (9,)
          % (10,)
        };
        \addplot[dashed] coordinates{(3,1)  (\xmax,1)};
        \addplot[dotted] coordinates{(3,1)  (3,0)};
        \addplot[dashed] coordinates{(4,4)  (\xmax,4)};
        \addplot[dotted] coordinates{(4,4)  (4,0)};
        \addplot[dashed] coordinates{(7,5)  (\xmax,5)};
        \addplot[dotted] coordinates{(7,5)  (7,0)};
        \node[draw=black,circle] at (3,0){};
        \node[draw=black,circle] at (4,0){};
        \node[draw=black,circle] at (7,0){};
      \end{axis}
    \end{tikzpicture}
    \caption{A function $f$ and a set $T$ such that $T$ is increasing on
      $f$.}\label{fig:f-T}
  \end{figure}
  Formally, let:
  \begin{equation}\label{def:T}
    T:=\set{t\in T'\mid \forall t'\in T', t<t'\implies f_{d-1}(t)<f_{d-1}(t)}.
  \end{equation}
  Note that $T$ is defined by the $\fo{\N,\V}$-formula:
  \begin{equation*}
    \tau_{d}(t):=\tau_{d-1}(t)\land\forall
    t<t'. \left(\tau_{d-1}(t')\land t<t'\right) \implies f_{d-1}(t)<f_{d-1}(t).
  \end{equation*}
  Let us prove that $T$ satisfies the required condition, that is:
  $\tu f$ is increasing on $T$ and $T$ is infinite. Let us first prove
  that $\tu f$ is increasing on $T$. By Definition \eqref{def:T} of
  $T$, $T$ is a subset of $T'$ and by induction hypothesis
  \eqref{eq:T'-incr}, the functions $f_{0},\dots,f_{d-2}$ are
  increasing on $T'$. Thus: \statement{eq:T-incr-2}{the functions
    $f_{0},\dots,f_{d-2}$ are increasing on $T$.} It remains to prove
  that $f_{d-1}$ is increasing on $T$.  Let: \statement{def:ts}{$t<t'$
    be two elements of $T$.} In order to prove that $f_{d-1}$ is
  increasing on $T$, it remains to prove that $f_{d-1}(t)$ is smaller
  than $f_{d-1}(t')$. By Definition \eqref{def:ts} of $t'$, $t'\in T$,
  and by Definition \eqref{def:T} of $T$, the elements of $T$ belong
  to $T'$, thus:
  \begin{equation}\label{eq:t1T'}
    t'\in T'.
  \end{equation}
  By Definition \eqref{def:ts} of $t$, $t\in T$ and $t<t'$, by
  Equation \eqref{eq:t1T'}, $t'\in T'$ and by Definition \eqref{def:T}
  of $T$, for all $t'\in T'$ greater than $t$,
  $f_{d-1}(t)<f_{d-1}(t')$, thus 
%  \begin{equation}
    $f_{d-1}(t)<f_{d-1}(t')$.
%  \end{equation}
  Since $t<t'\in T$ implies that
  $f_{d-1}(t)<f_{d-1}(t')$, then:
  \statement{stat:f-1-incr}{$f_{d-1}$ is increasing.}  Having both
  Statements \eqref{eq:T-incr-2} and \eqref{stat:f-1-incr} implies
  that:\statement{stat:f-incr}{$\tu f$ is increasing on $T$.}
  \paragraph{}
  It remains to prove that $T$ is infinite. More precisely, it is
  proven that, for all $i\in\N$, $T$ contains at least $i$ elements.
  The proof is by induction on $i$. For $i=0$, it is trivial. Let
  $i\in\N^{>0}$ and let us assume that:\statement{T-i-1}{The set $T$
    contains a subset $T_{i-1}$ of cardinality $i-1$.}  In order to
  prove that $T$ contains at least $i$ elements, it suffices to define
  some integer $t$ and prove that $t\in T\setminus T_{i-1}$. The
  integer $t$ considered in the remaining of this proof is such that
  $\max{T_{i-1}}<t$, such that $f_{d-1}(t)$ is minimal under the
  preceding condition, and such that $t$ is maximal under the
  preceding conditions. Using the example of Figure \ref{fig:f-T} with
  $i=3$, $T_{2}=\set{3,4}$. The value of $t$ is then $7$.
  Geometrically, it is the one of lowest point $(t,f_{d-1}(t))$
  belonging to the graph on $f$ and on the right side of $(3,1)$ and
  $(4,4)$. Furthermore, between all of those minimal elements, it is
  the right-most one.
   
  It must be proven that this minimum and this maximum exists and the
  integer satisfying this definition belongs to $T\setminus T_{i-1}$.
  Let us first prove that $t$ is correctly defined.  In the remaining
  of the proof, let $\max\emptyset=-1$. This assumption allows to
  avoid to consider the case $i=1$ as special case. It is now proven
  that there exists a minimal integer $c$ of the form $f_{d-1}(t)$
  with $\max{T_{i-1}}<t$. % Let
  % \begin{equation}\label{def:m}
  %   m:=\max\set{f_{d-1}(t)\mid t\in T_{i-1}},
  % \end{equation}
  By Hypothesis \eqref{eq:hyp-f-incr},
  $\lim_{t\to+\infty}f_{d-1}(t)=+\infty$ and by Induction hypothesis
  \eqref{eq:T'-inf}, $T'$ is infinite, hence
  %\statement{eq:t>m-not-empty}'
  {\begin{equation*}\set{t \mid t\in T',\max{T_{i-1}}<t, \max\left(f_{d-1}(T_{i-1})\right)<f_{d-1}(t)}\end{equation*} is not empty,} and
  thus the image of $f_{d-1}$ on this set, % By Statement \eqref{eq:t>m-not-empty}
  % the set
  % $\set{t \mid t\in T',\max{T_{i-1}}<t, \max\left(f_{d-1}(T_{i-1})\right)<f_{d-1}(t)}$
  % is not empty,
  % thus
  %\statement{eq:f()-not-empty}
  %{$
  \begin{equation*} \set{f_{d-1}(t)\mid t\in
      T',\max{T_{i-1}}<t,\max\left(f_{d-1}(T_{i-1})\right)<f_{d-1}(t)},\end{equation*}%$
  is not empty. % }
  % By Statement \eqref{eq:f()-not-empty}, the set
  % $\set{f_{d-1}(t)\mid t\in T',\max{T_{i-1}}<t,\max\left(f_{d-1}(T_{i-1})\right)<f_{d-1}(t)}$
  % is not empty,
  Since this set is a non-empty subset of $\N$, it contains a minimal
  element $c$. Formally, let:
  \begin{equation}\label{def:c}
    c:=\min\set{f_{d-1}(t)\mid t\in T',\max{T_{i-1}}<t, \max\left(f_{d-1}(T_{i-1})\right)<f_{d-1}(t)}.
  \end{equation}
  As stated above, $c$ is the minimal integer of the form $f_{d-1}(t)$
  with $\max{T_{i-1}}<t$. It is now shown that there exists a maximal
  integer $t$, greater than $\max{T_{i-1}}$ such that $f_{d-1}(t)=c$.
  Note that it follows from Definition \eqref{def:c} of $c$ that:
  \begin{equation}\label{c>m}
    \max\left(f_{d-1}(T_{i-1})\right)<c.
  \end{equation}
  By Definition \eqref{def:c} of $c$, it follows that:
  \statement{eq:C-not-empty'}{
    $\set{t\mid t\in T',\max{T_{i-1}}<t, f_{d-1}(t)= c,
      \max\left(f_{d-1}(T_{i-1})\right)<f_{d-1}(t)}$
    is not empty.}  By Equation \eqref{c>m}
  $\max\left(f_{d-1}(T_{i-1})\right)<c$, thus having $f_{d-1}(t)= c$
  implies that $\max\left(f_{d-1}(T_{i-1})\right)<f_{d-1}(t)$. Hence
  Statement \eqref{eq:C-not-empty'} is equivalent to:
  \statement{eq:C-not-empty}{
    $\set{t\mid t\in T',\max{T_{i-1}}<t, f_{d-1}(t)= c}$ is not
    empty.}  By Hypothesis \eqref{eq:hyp-f-incr},
  $\lim_{t\to+\infty}f_{d-1}(t)=+\infty$, thus there exists an
  integer %\statement{eq:N}{
  $N\in\N$ such that for all $N<t$, $c<f_{d-1}(t)$.
%}  It follows from Statement \eqref{eq:N} that%:
  Hence %: \statement{eq:C-Finite}{
  $N$ is an upper-bound of the set
  $\set{t\mid t\in T',\max{T_{i-1}}<t, f_{d-1}(t)= c}$. %} The set
  % $\set{t\mid t\in T', \max{T_{i-1}}<t,f_{d-1}(t)= c}$ is a set of
  % integers with an upper-bound by Statement \eqref{eq:C-Finite} and
  Since, furthermore, by Statement \eqref{eq:C-not-empty}, this set is
  not empty, it admits a maximal element $t$. Formally, let:
  \begin{equation}\label{eq:t}
    t:=\max\set{t\mid t\in T',\max{T_{i-1}}<t, f_{d-1}(t)= c}.  
  \end{equation}
  Note that:
  \begin{equation}\label{eq:f(t)=c}
    c=f_{d-1}(t),
  \end{equation}
  and that:
   \begin{equation}
     \label{eq:t>t_i+1}
    \max{T_{i-1}}<t. 
  \end{equation}
  Note that the integer $t$ satisfies the properties stated in the
  beginning of the proof. It is greater than $\max{T_{i-1}}$,
  $f_{d-1}(t)$ is minimal under the preceding condition and $t$ is
  maximal under the preceding conditions.  It is now proven that
  $t\in T\setminus T_{i-1}$.  By \eqref{eq:t>t_i+1},
  $\max{T_{i-1}}<t$, hence:
  \begin{equation}\label{eq:tnotinTi-1}
    t\not\in T_{i-1}.
  \end{equation}
  It remains to prove that $t\in T$. By Definition \eqref{def:T} of
  $T$, it suffices to prove that for all $t'\in T'$, $t<t'$ implies
  $f_{d-1}(t)<f_{d-1}(t')$.  Let: \statement{eq:t>T_i}{$t'\in T'$ such
    that $t<t'$.} By Equation \eqref{eq:t>t_i+1} $\max{T_{i-1}}<t$ and
  by Statement \eqref{eq:t>T_i} $t<t'$. Hence:
  \begin{equation}
    \label{eq:t>max(T_{i-1})}
    \max{T_{i-1}}<t'.
  \end{equation}
  % By Statement \eqref{eq:t>T_i}, $t'\in T'$ and by Equation
  % \eqref{eq:t>max(T_{i-1})} $\max{T_{i-1}}<t'$, hence, by Definition \eqref{def:T}
  % of $T$:
  % \statement{eq:f(t)>T}{
  %   $f_{d-1}(n)<f_{d-1}(t')$ for all $n\in T_{i-1}$.}
  Since $T_{i-1}\subseteq T$:
  \begin{equation}\label{eq:maxT_iinT}
    \max{T_{i-1}}\in T.
  \end{equation}
  By Statement \eqref{eq:maxT_iinT}, $\max{T_{i-1}}\in T$, by Equation
  \eqref{eq:t>max(T_{i-1})}, $\max{T_{i-1}}<t'$, thus  by definition
  \eqref{def:T} of $T$:
  \begin{equation}\label{eq:f(t)>m'}
    f_{d-1}\left(\max(T_{i-1})\right)<f_{d-1}(t').
  \end{equation}
  By Statement \eqref{stat:f-incr}, $f_{d-1}$ is increasing on $T$ and
  by Definition \eqref{T-i-1} of $T_{i-1}$, $T_{i-1}\subseteq T$,
  hence $f_{d-1}$ is increasing on $T_{i-1}$. It follows that
  \begin{equation}\label{eq:max-f}
    f_{d-1}\left(\max(T_{i-1})\right)=\max\left(f_{d-1}(T_{i-1})\right).  
  \end{equation}
  Using this equality, $f_{d-1}\left(\max(T_{i-1})\right)$ can be
  replaced by $\max\left(f_{d-1}(T_{i-1})\right)$ in Equation
  \eqref{eq:f(t)>m'}. It follows that:
  \begin{equation}\label{eq:f(t)>m}
    \max\left(f_{d-1}(T_{i-1})\right)<f_{d-1}(t').
  \end{equation}
  % % By Statement \eqref{eq:f(t)>m}, $f_{d-1}(t)>f_{d-1}(t')$,
  By Statement \eqref{eq:t},  $t$ is the maximal element of $T'$,
  greater than $\max{T_{i-1}}$ and such that $f_{d-1}(t)= c$. Since, 
  by definition \eqref{eq:t>T_i} of $t'$, $t'\in T'$ and $t<t'$, and
  since, by
  Equation \eqref{eq:t>max(T_{i-1})},
  $\max{T_{i-1}}<t$, it follows that:
  \begin{equation}\label{eq-t-c}
    c\ne f_{d-1}(t').
  \end{equation}
  By Statement \eqref{def:c}, $c$ is the minimal integer of the form
  $f_{d-1}(t)$, for $t\in T'$, with $\max{T_{i-1}}<t$, and
  $\max\left(f_{d-1}(T_{i-1})\right)<f_{d-1}(t)$. Since, by Definition
  \eqref{eq:t>T_i} of $t'$, $t'\in T'$, since, by Equation
  \eqref{eq:t>max(T_{i-1})}, $\max{T_{i-1}}<t'$ and since, by Equation
  \eqref{eq:f(t)>m} $\max\left(f_{d-1}(T_{i-1})\right)<f_{d-1}(t')$,
  it follows that:
  \begin{equation}\label{t-c}
    c\le f_{d-1}(t').
  \end{equation}
  By Equation \eqref{eq-t-c} $c\ne f_{d-1}(t')$ and by Equation
  \eqref{t-c} $c\le f_{d-1}(t')$ then:
  \begin{equation}\label{f(t)>c}
    c<f_{d-1}(t').
  \end{equation}
  By Equation \eqref{f(t)>c} $c<f_{d-1}(t')$ and by Equation
  \eqref{eq:f(t)=c}, $c=f_{d-1}(t)$, thus:
%  \begin{equation}
  $f_{d-1}(t)<f_{d-1}(t')$. 
  % \end{equation}
  Since, for all $t'\in T'$ with $t<t'$, $f_{d-1}(t)<f_{d-1}(t')$,
  then:
  \begin{equation}\label{eq:tInT}
    t\in T.
  \end{equation}
  Let $T_{i}=T_{i-1}\cup\set t$. By Equation \eqref{eq:tnotinTi-1},
  $t\not\in T_{i-1}$, and by Definition \eqref{T-i-1} of $T_{i-1}$,
  $T_{i-1}$ contains $i-1$ elements. Thus $T_{i}$ contains $i$
  elements. By Definition \eqref{T-i-1} of $T_{i-1}$, the set
  $T_{i-1}$ is a subset of $T$ and by Equation \eqref{eq:tInT},
  $t\in T$, thus $T_{i}\subseteq T$. Hence $T$ admits a subset with
  $i$ elements. Hence the induction hypothesis holds.
\end{proof}
Theorem \ref{prop:michaux} is now proved.
\begin{proof}
  The formula $\nu_{d}(x)$ is defined by induction on $d$. If $d=1$,
  by Lemma \ref{relun}, $R^{\s}$ is not ultimately periodic, hence it
  suffices to set $\nu_{1}(x)=R(x)$.  Let us assume that $d$ is at
  least 2 and that the theorem holds for $d-1$.  Let:
  \statement{eq:def-F0}{$\tilde{\mathcal P}$ be the set of structures
    $\s$ such that $R^{\s}$ is not $\fo{\N,+,<}$-definable.}  In this
  proof, many functions are defined.  Their domain is
  $\tilde{\mathcal P}\times\N^{d'}$ for some $d'\in\N$ and their
  codomains are either the set of integers, the set of tuples of
  integers, or the set of subsets of $\N$.  It is then shown that one
  of the set of integers is not ultimately periodic. When those
  functions are defined, $\fo{\N,+,<,R}$-formulas which define them
  are also given.
  
  \paragraph{}
  Let $\s\in \tilde{\mathcal P}$. By Definition \eqref{eq:def-F0} of
  $\tilde{\mathcal P}$, $R^{\s}$ is not $\fo{\N,+,<}$-definable. Thus,
  by Corollary \ref{theo-muchnik} %: \statement{eq:hyp-S}{
  the structure $\s$ satisfies one of the two properties of Corollary
  \ref{theo-muchnik}. %}
  Two cases must be considered, depending on which property of
  Corollary \ref{theo-muchnik} is satisfied. Let us first assume that
  $\s$ satisfies Property \eqref{muc-ori-recur} of Corollary
  \ref{theo-muchnik}.  That is, there exists $i\in[d-1]$ and $j\in\N$
  such that $\secti{R^{\s}}{i}{j}$ is not $\fo{\N,+,<}$-definable. In
  this case, the induction on $d$ can be used on the lexicographically
  minimal pair $(i,j)$ to generate the set $E(R^{\s})$. In Example
  \ref{ex:michaux}, $\s^{0}$ satisfies Property \eqref{muc-ori-recur}
  of Corollary \ref{theo-muchnik}.  And in Example \ref{ex:michaux},
  $\s^{1}$ satisfies Property \eqref{muc-ori-local} of Corollary
  \ref{theo-muchnik}.

  Let us define the formula $\nu_{d}$.  Let us assume that there
  exists a $\fo{\N,+,<,R}$-formula $\nu_{d,1}(x)$ such that, for all
  $\s$ which satisfies Property \eqref{muc-ori-local} of Corollary
  \ref{theo-muchnik}, the set $\nu_{d,1}(x)^{\s}$ is not ultimately
  periodic.  Let $\mu_{d,i}(s)$ be the $\fo{\N,+,<,R}$-formula which
  states that $\secti{R^{\s}}{i}{s}$ is $\fo{\N,+,<}$-definable. It is
  defined by the formula $\mucF{d-1}$ of Theorem
  \ref{much-orig-formula} where $R(n_{0},\dots,n_{d-2})$ is replaced
  by $R(n_{0},\dots,n_{i-1},s,n_{i},\dots,n_{d-2})$. Let
  $\nu'_{d-1,i}(x,s)$ be the formula $\nu_{d-1}(x)$ where
  $R(n_{0},\dots,n_{d-2})$ is replaced by
  $R(n_{0},\dots,n_{i-1},s,n_{i},\dots,n_{d-2})$ for all terms
  $\tu n$. Then let
  \begin{equation*}
    \nu_{d}(x):=
    \ite{
      \bigvee_{i=0}^{d-1}\exists s.\minTu{i\in[d-1],s}{\neg\mu_{d,i}(s)}
    }{
      \nu'_{d-1,i}(x,s)
    }{
      \nu_{d,1}(x)}.
  \end{equation*}
  Recall that the notation $\minTu{x}{\phi}$ is introduced in Notation
  \eqref{min-exists} and that the notation $\ite{\phi}{\psi}{\xi}$ is
  introduced in Notation \eqref{ite}.

  \paragraph{}
  It remains to construct the $\fo{\N,+,<,R}$-formula
  $\nu_{d,1}(x)$. It is now assumed that the structure $\s$ satisfies
  Property \eqref{muc-ori-local} of Corollary \ref{theo-muchnik}.
  That is: \statement{eq:s-local}{For every $s\in \N$, there exists
    $k(R^{\s},s)\in\N$ such that for every $t\in\N$, there exists
    $\tu{c}(R^{\s},s,t)\in\N^{d}$ with $t\le\min(\tu{c}(R^{\s},s,t))$
    such that the \notShiftable{R^\s}{\tu
      c(R^{\s},s,t)}{k(R^{\s},s)}{s}.}  As in Section
  \ref{sec:some-res}, for $s$ and $t$ fixed, $k(R^{\s},s)$ and
  $\tu{c}(R^{\s},s,t)$ denote the lexicographically minimal tuple of
  integer which satisfies Statement \eqref{eq:s-local}.  Recall that,
  by Lemmas \ref{lem:c} and \ref{lem:k}, they are defined by the
  $\fo{\N,+,<,R}$-formulas $\gamma_{d}(s,t;\tu{c})$ and
  $\kappa_{d}(s;K)$ respectively.  Note in particular that, by
  Statement \eqref{eq:s-local}, $t\le\min(\tu{c}(R^{\s},s,t))$, thus
  for all $s\in\N$ %: \statement{min-inf}{
  $\lim_{t\to+\infty}\min(\tu{c}(R^{\s},s,t))=+\infty$. %} % By Definition \eqref{eq:s-local} of $\tu{c}$:
  % \statement{eq:min>t}{$\min(\tu{c}(R^{\s},s,t))\ge t$, for all
  %   $s\in\N$ and $t\in\N$.} 
  Hence, for all $i\in[d-1]$ and for all $s\in\N$:
  \statement{c_i-inf}{$\lim_{t\to+\infty}(c_{i}(R^{\s},s,t))=+\infty$.}
  By Statement \eqref{c_i-inf}, for each $s\in\N$, the $d$ functions
  $c_{0}(R^{\s},s,\cdot)$, \dots, $c_{d-1}(R^{\s},s,\cdot)$ satisfy
  the hypothesis of Lemma \ref{lem:croissant}. Let:
  \statement{eq:def:T}{ $T(R^{\s},s)\subseteq{}\N$ be the set defined
    by Lemma \ref{lem:croissant} applied to the $d$ functions
    $c_{0}(R^{\s},s,\cdot)$, \dots, $c_{d-1}(R^{\s},s,\cdot)$,} and
  let: \statement{eq:def:tau}{$\tau_{d}(s,t)$ be the
    $\fo{\N,+,<,R}$-formula of Lemma \ref{lem:croissant} where each
    equality $y\doteq f_{i}(x)$ is replaced by the formula
    $\exists
    z_{0},\dots,z_{d-2}.\gamma_{d,i}(s,t;z_{0},\dots,z_{i-1},y,z_{i},\dots,z_{d-1})$.}
  By Lemma \ref{lem:croissant}, this formula defines $T(R^{\s},s)$.
  The set $T(R^{\s},s)$ is a set of indexes such
  that:\statement{c_incr_on_T}{ for all $t,t'\in T(R^{\s},s)$, $t<t'$
    implies $\tu{c}(R^{\s},s,t)<\tu{c}(R^{\s},s,t')$.}  The set
  $E(R^{\s})$ is extracted from $T(R^{\s},s)$.  For the structure
  $\s^{1}$ of Example \ref{ex:michaux2}, and for all $s\in\N^{>0}$,
  $T(R^{\s^{1}},s)= 2\N$, and, as explained in Example \ref{ex:muc},
  $\tu{c}(R^{\s^{1}},s,t)$ is of the form $((c+1)^{2},c)$.
  \paragraph{}
  The cubes of size $s$ at position $\tu{c}(R^{\s},s,t)$, are now
  considered.  For $s,t\in\N$, let:
  \statement{eq:def:K}{$K(R^{\s},s,t)$ denote the cube
    $\cube{\tu{c}(R^{\s},s,t)}{k(R^{\s},s)}{R^{\s}}$.}  The cube
  $K(R^{\s},s,t)$ is such that all of its coordinates are at least $t$
  and furthermore the \notShiftable{R^\s}{\tu
    c(R^\s,s,t)}{k(R^\s,s)}{s}.  For each $s\in\N$, by Definition
  \eqref{eq:def:K},% of
  % $K(R^{\s},s,t)$
  the set $\{K(R^{\s},s,t)\mid t\in T(R^{\s},s)\}$ is a set of subsets
  of $[k(s)-1]^{d}$, hence is finite. It implies that
  % \statement{eq-pigeon}{
  for each $s\in\N$, there exists some cube
  $K(R^{\s},s,t)\subseteq[k(s)-1]^{d}$ that appears infinitely often
  in the sequence $(K(R^{\s},s,t))_{t\in T(R^{\s},s)}$. %}
  More precisely, it % Statement \eqref{eq-pigeon}
  implies that: \statement{def:f(R,s)}{For each $s\in \N$, there
    exists an integer $f(R^{\s},s)\in T(R^{\s},s)$ such that
    $K(R^{\s},s,f(R^{\s},s))$ appears infinitely often in the sequence
    $(K(R^{\s},s,t))_{t\in T(R^{\s},s)}$.}  Similarly to the choice of
  value of $k$ and $\tu{c}$, the value of $f(R^{\s},s)$ is chosen
  minimal.  A $\fo{\N,+,<,R}$-formula $\phi(s;F)$ which states that
  $F=f(R^{\s},s)$, as in Definition \eqref{def:f(R,s)}, is now given.
  By Lemma \ref{lem:eq-cube}, there exists a $\fo{\N,+,<,R}$-formula
  $\beta_{d}(\tu x,\tu y,k)$ which states that the cube
  $\cube{\tu{x}}{k}{R^{\s}}$ is equal to the cube
  $\cube{\tu{y}}{k}{R^{\s}}$.  Then, let:
  \begin{equation*}
    \phi(s;F):= \minTu{F}{
      \tau_{d}(s,F)\land
      \forall t\in\N.\exists t'. t<t'\land \tau_{d}(s,t')\land
      \beta_{d}(\tu x(s,F),\tu x(s,t'),\kappa_{d}(s))
    }.
  \end{equation*}
  \paragraph{}
  A name is now given to this cube which appears infinitely
  often. Let:
  \begin{equation}\label{def:I}
    I(R^\s,s):=K(R^{\s},s,f(R^{\s},s)).
  \end{equation}
  For the structure $\s^{1}$ of Example \ref{ex:michaux2},
  $f(R^{\s^{1}},s)=0$, for all $s\in\N^{>0}$, and
  \begin{equation*}
    I\left(R^{\s^{1}},s\right)=\cube{\tu{C}(R^{\s^{1}},s,0)}{0}{R^{\s^{1}}}=\set{(0,1)}.
  \end{equation*}
  \paragraph{}
  The set $E(R^{\s})$ is extracted from the set $X(R^{\s},s)$ of
  indices of cubes equal to $I(R^{\s},s)$.  Formally, let:
  \statement{eq:def:x}{$X(R^{\s},s):=\{t\in
    T(R^{\s},s)\mid{}K(R^{\s},s,t)=I(R^{\s},s)\}$.}
  For the structure $\s^{1}$ of Example \ref{ex:michaux}, for all
  $s\in\N^{>0}$, $X(R^{\s^{1}},s)=T(R^{\s^{1}},s)=2\N$.  A
  $\fo{\N,+,<,R}$-formula $\xi_{d}(s,t)$ is now given, which states
  that $t\in X(R^{\s},s)$. Let:
  \begin{equation*}
    \xi_{d}(s,t):=\tau_{d}(s,t)\land\beta_{d}(\psi_{d}(s,t),\psi_{d}(s,\phi(s)),\kappa_{d}(s)).
  \end{equation*}
  \paragraph{}
  By Definition \eqref{def:f(R,s)} of $f(R^{\s},s)$, for all $s\in\N$:
  \statement{eq:X-infinite}{The set $X(R^{\s},s)$ is infinite.}  It
  follows from Statements \eqref{eq:X-infinite} and \eqref{c_i-inf}
  that, for all $s\in\N$: \statement{eq:c-infinite}{The set
    $\{\tu{c}(R^{\s},s,t)\mid t\in X(R^{\s},s)\}$ is also infinite.}
  The set $E(R^{\s})$ is constructed from
  $\{\tu{c}(R^{\s},s,t)\mid t\in X(R^{\s},s)\}$. Note however that it
  is a set of tuples of integers and not a set of integers. In order
  to consider a set of integers, the set of norms is now considered.
  For all $s\in\N$, let
  \begin{equation}\label{def:N()}
    N(R^{\s},s):=\set{\norm{c(R^{\s},s,t)}\mid{t\in{X(R^{\s},s)}}}.
  \end{equation}
  For the structure $\s^{1}$ of Example \ref{ex:michaux},
  $N(s)=\set{c+(c+1)^{2}\mid c\equiv0\mod 2,\ \fracInline{s}4<c}$.  A
  $\fo{\N,+,<,R}$-formula $\zeta_{d}(s,x)$ is given, which states that
  $x\in N(R^{\s},s)$, as in Definition \eqref{def:N()}.  Recall that
  $\gamma_{d}(s,t)$ is the formula of Lemma \ref{lem:c} which defines
  $\tu{c}(R^{\s},s,t)$.  Then let:
  \begin{equation}\label{eq:zeta}
    \zeta_{d}(s,x):=\exists t. \xi_{d}(s,t) \land x\doteq\norm{\tu \gamma_{d}(s,t)},
  \end{equation}
  \paragraph{}
  Two cases must be considered, depending on whether there exists some
  $s$ such that $N(R^{\s},s)$ is not ultimately periodic or whether
  for all $s$, $N(R^{\s},s)$ is ultimately periodic.  If there exists
  an integer $s$ such that $N(R^{\s},s)$ is not ultimately periodic,
  then, let $E(R^{\s})$ be $N(R^{\s},s)$. As usual, the integer $s$ is
  assumed minimal.  % Let: \statement{eq:def:F2}{$F^{2}$ be the set of
    % structures $\s\in F^{1}$ such that for all $s\in\N$, the set
    % $N(R^{\s},s)$ is ultimately periodic.}

  The formula $\nu_{d,1}(x)$ is now defined.  Let us assume that there
  exists a $\fo{\N,+,<,R}$-formula $\nu_{d,2}(x)$ such that
  $\nu_{d,2}(x)^{\s}$ is not ultimately periodic, assuming that for
  all $s$, $N(R^{\s},s)$ is ultimately periodic.  If there is $s\in\N$
  such that some $N(R^{\s},s)$ is not ultimately periodic then
  $\nu_{d,1}(x)$ defines $N(R^{\s},s)$ with $s$ minimal.  Otherwise
  $\nu_{d,1}(x)$ uses the formula $\nu_{d,2}(x)$. Let $\mucF{1}'(s)$
  be the formula which states that $N(R^{\s},s)$, it is the formula
  $\mucF{1}$ of Theorem \ref{much-orig-formula}, where $R(x)$ is
  replaced by $\zeta_{d}(s,x)$.  Finally, let:
  \begin{equation*}
    \nu_{d,1}(x):=\ite{\exists s.\minTu{s}{\neg\mucF{1}'(s)}}{\zeta_{d}(s,x)}{\nu^{2}_{d}(x)}.
  \end{equation*}
  \paragraph{}
  It remains to construct the formula $\nu_{d,2}(x)$. It is now
  assumed that: \statement{eq:N-period}{ For all $s$, there exists an
    integer $p(R^{\s},s)$ such that $N(R^{\s},s)$ is ultimately
    $p(R^{\s},s)$-periodic.  } Similarly to the choice of value of
  $k$, $\tu{c}$ and $f$, the integer $p(R^{\s},s)$ is the minimal
  integer which satisfies Statement \eqref{eq:N-period}.  The set
  $E(R^{\s})$ is the set constructed by Lemma \eqref{lem:ult-per}
  applied to the set $\{(s,n)\mid n\in N(R^{\s},s)\}$.  The
  $\fo{\N,+,<,R}$-formula $\nu_{d,2}(x)$ is just the formula
  $\epsilon(x)$ of Lemma \ref{lem:ult-per}, where $R(s,x)$ is replaced
  by the formula $\zeta_{d}(s,x)$.

  It remains to prove that Lemma \eqref{lem:ult-per} can be applied to
  this set, that is, that $\lim_{s\to\infty }p(R^{\s},s)=\infty$. More
  precisely, it is proven that $s<p(R^{\s},s)$. In order to do this,
  it suffices to prove that $N(R^{\s},s)$ is infinite and that the
  distance between two distinct elements belonging to $N(R^{\s},s)$ is
  at least $s$.  Let: \statement{eq:def.s,t,'}{$s\in \N$ and
    $t,t'\in X(R^{\s},s)$ such that $t<t'$.}  It follows, by
  Definition \eqref{eq:def:x} of $X(R^{\s},s)$, that:
  \begin{equation}\label{eq:eq:K}
    K(R^{\s},s,t)=K(R^{\s},s,f(R^{\s},s))=K(R^{\s},s,t').
  \end{equation}
  By Definition \eqref{eq:def:K} of $K(R^{\s},s,t)$, it implies:
  \begin{equation}\label{eq:eq:c}
    \cube{\tu{c}(R^{\s},s,t)}{c(R^{\s},s)}{R^{\s}}=
    \cube{\tu{c}(R^{\s},s,t')}{c(R^{\s},s)}{R^{\s}}.
  \end{equation}
  By Definition \eqref{eq:s-local} of $\tu{c}(R^{\s},s,t)$, % :
  % \statement{eq:C-not-s}{
  the \notShiftable{R^\s}{\tu c(R^\s,s,t)}{k(R^s,s)}{s}. %}
  Hence, by  Equation \eqref{eq:eq:c}:%, and by Statement \eqref{eq:C-not-s},
  \statement{eq:shift-disjonction}{$\tu{c}(R^{\s},s,t)=
    \tu{c}(R^{\s},s,t')$
    or $s<\max(\abs{\tu{c}(R^{\s},s,t')-\tu{c}(R^{\s},s,t)})$.}  By
  Definition \eqref{eq:def.s,t,'} of $t$ and $t'$,
  $t,t'\in X(R^{\s},s)$, and by definition \eqref{eq:def:x} of
  $X(R^{\s},s)$:
  \begin{equation}\label{eq:tt'InT}
    t,t'\in T(R^{\s},s)
  \end{equation}
  By Statement \eqref{eq:tt'InT}, $t,t'\in T(R^{\s},s)$, by Definition
  \eqref{eq:def.s,t,'} of $t$ and $t'$, $t<t'$, by Statement
  \eqref{c_incr_on_T}, $t\mapsto\tu c(R^{\s},s,t)$ is increasing on
  $T(R^{\s},s)$, thus:
  \begin{equation}\label{eq:c<c}
    \tu{c}(R^{\s},s,t)<\tu{c}(R^{\s},s,t').
  \end{equation}
  It follows trivially that:
  \begin{equation}\label{eq:cnec}
    \tu{c}(R^{\s},s,t)\ne\tu{c}(R^{\s},s,t').
  \end{equation}
  By Equation \eqref{eq:cnec} and by Statement
  $\tu{c}(R^{\s},s,t)\ne\tu{c}(R^{\s},s,t')$ and by Statement
  \eqref{eq:shift-disjonction} (either
  $\tu c(R^{\s},s,t)=\tu c(R^{\s},s,t')$ or
  $c<\max(\abs{\tu c(R^{\s},s,t')-\tu{c}(R^{\s},s,t)})$), hence:
  \begin{equation}\label{eq:abs>s}
    s<\max(\abs{\tu c(R^{\s},s,t')-\tu{c}(R^{\s},s,t)}).
  \end{equation}
  By Equation \eqref{eq:c<c} $\tu{c}(R^{\s},s,t)<\tu{c}(R^{\s},s,t')$, thus:
  % and by Equation \eqref{eq:abs>s} $\max(\abs{\tu c(R^{\s},s,t')-\tu{c}(R^{\s},s,t)})>s$,
  % hence:
  \begin{equation}\label{eq:t'-t-abs}
    \tu{c}(R^{\s},s,t')-\tu{c}(R^{\s},s,t)=\abs{\tu{c}(R^{\s},s,t')-\tu{c}(R^{\s},s,t)}.
  \end{equation}
  By Equation \eqref{eq:t'-t-abs}
  $(\tu{c}(R^{\s},s,t')-\tu{c}(R^{\s},s,t))=\abs{\tu{c}(R^{\s},s,t')-\tu{c}(R^{\s},s,t)}$,
  replacing $\abs{\tu c(R^{\s},s,t')-\tu{c}(R^{\s},s,t)}$ by
  $\tu c(R^{\s},s,t')-\tu{c}(R^{\s},s,t)$ in Equation
  \eqref{eq:abs>s}, it follows that
  $% \begin{equation}\label{eq:t'-t>s}
  s<\max(\tu{c}(R^{\s},s,t')-\tu c(R^{\s},s,t)) $, % \end{equation}
  thus: \statement{def:i:c}{There exists $i\in[d-1]$ such that
    $s<c_{i} (R^{\s},s,t')- c_{i}(R^{\s},s,t)$.}  By Statement
  \eqref{eq:c<c} $0<c_{j}(R^{\s},s,t')- c_{j}(R^{\s},s,t)$ for all
  $j\in[d-1]$, and by Statement \eqref{def:i:c}
  $s<c_{i} (R^{\s},s,t')- c_{i}(R^{\s},s,t)$ for some $i\in[d-1]$. It
  follows that:
  \begin{equation}\label{eq:s<sum}
    s<\sum_{i=0}^{d-1}c_{i}(R^{\s},s,t')-c_{i}(R^{\s},s,t).
  \end{equation}
  Note that
  $\norm{\tu{c}(R^{\s},s,t')}-\norm{\tu{c}(R^{\s},s,t)}=\sum_{i=0}^{d-1}c_{i}(R^{\s},s,t')-c_{i}(R^{\s},s,t)$.
  Replacing $\sum_{i=0}^{d-1}c_{i}(R^{\s},s,t')-c_{i}(R^{\s},s,t)$ by
  $\norm{\tu{c}(R^{\s},s,t')}-\norm{\tu{c}(R^{\s},s,t)}$ is Equation
  \eqref{eq:s<sum} gives:
  \begin{equation}\label{eq:->s}
    s<\norm{\tu{c}(R^{\s},s,t')}-\norm{\tu{c}(R^{\s},s,t)}.
  \end{equation}
  \paragraph{}
  By Statement \eqref{eq:X-infinite}, $X(R^{\s},s)$ is infinite, and
  by Statement \eqref{eq:->s}, for $t$ distinct from $t'$ belonging to
  $X(R^{\s},s)$, $\norm{c(R^{\s},s,t)}\ne\norm{c(R^{\s},s,t')}$, thus:
  \statement{eq:N-infinite}{The set $N(R^{\s},s)$ is infinite.}  By
  Statement \eqref{eq:N-infinite} $N(R^{\s},s)$ is infinite and by
  Statement \eqref{eq:->s}, the difference between two integers of
  $N(R^{\s},s)$ is strictly greater than $s$. Hence:
  \statement{eq:N-not-periodic}{$N(R^{\s},s)$ is not ultimately
    $p$-periodic for any $p\le s$.}  By Definition \eqref{eq:N-period}
  of $p(R^{\s},s)$ and Statement \eqref{eq:N-not-periodic}, 
  % \statement{eq:p>s}{
  $s<p(R^{\s},s)$ for all $s\in\N$. %}
  Hence $
  % \begin{equation}
  %   \label{eq:lim:p}
  \lim_{s\to\infty }p(R^{\s},s)=\infty$. 
  % \end{equation}
  It follows that the set $\set{(s,i)\mid i\in N(R^{\s},s)}$ satisfies
  the hypothesis of Lemma \ref{lem:ult-per}.  It thus suffices to take
  $E(R^{\s})$ to be the set generated by this lemma and $\nu_{d,2}(x)$
  to be the formula $\epsilon(x)$ of Lemma \ref{lem:ult-per}, where
  $R(s,x)$ is replaced by the formula $\zeta_{d}(s,x)$.
\end{proof}
This theorem admits the following corollary.
\begin{corollary}\label{cor:michaux}
  Let $d\in\N^{>0}$. Let $R$ be a $d$-ary relation symbol.  There
  exists a $\fo{\N,+,<,R}$-formula $\nu_{d}'(x)$ such that, for every
  $\{R,+,<\}$-structure ${\s}$, if $R^{\s}$ is not
  $\fo{\N,+,<}$-definable then $\nu'_{d}(x)^{\s}$ is not expanding.
\end{corollary}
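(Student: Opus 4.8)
The aim is to produce, uniformly in $R$, a $\fo{\N,+,<,R}$-formula whose interpretation is infinite with bounded gaps (hence not expanding) whenever $R^{\s}$ is not $\fo{\N,+,<}$-definable — and, to make the statement non-vacuous, also not ultimately periodic, i.e. not $\fo{\N,+,<}$-definable. The plan is to take for $\nu'_{d}(x)$ the complement of the set of right endpoints of the maximal runs of an expanding set built from $\nu_{d}$. Since $\nu_{d}(x)^{\s}$ itself need not be expanding (for $d=1$ it is just $R^{\s}$), I would first feed it into the formula $\delta$ of Theorem \ref{theo:exp}: let $\mu_{d}(x)$ be the formula $\delta(x)$ in which every atomic subformula $R(\tau)$ is replaced by $\nu_{d}(\tau)$. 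By Theorem \ref{prop:michaux}, if $R^{\s}$ is not $\fo{\N,+,<}$-definable then $\nu_{d}(x)^{\s}$ is not ultimately periodic, so by Theorem \ref{theo:exp} the set $A:=\mu_{d}(x)^{\s}$ is expanding. I then set
\[ \nu'_{d}(x):=\neg\mu_{d}(x)\lor\mu_{d}(x+1), \]
so that $\nu'_{d}(x)^{\s}=\N\setminus D$, where $D:=\set{n\in\N\mid n\in A,\ n+1\notin A}$ is the set of right endpoints of the maximal runs of $A$. The formulas $\delta$, $\nu_{d}$, $\mu_{d}$ do not depend on the interpretation of $R$, hence neither does $\nu'_{d}$.

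The core of the argument is the following claim about an arbitrary expanding set $A$: the set $D$ of right endpoints of its maximal runs is infinite, contains no two consecutive integers, and is not ultimately periodic. Infiniteness is immediate, since $A$ is infinite but not cofinite (it has arbitrarily long gaps) and hence has infinitely many maximal runs; and $n\in D$ forces $n+1\notin A$, hence $n+1\notin D$. For non-ultimate-periodicity I would argue by contradiction. An infinite ultimately periodic set has its gaps bounded, say by $M_{0}$, beyond some threshold $t_{1}$. Because $A$ is expanding, arbitrarily long gaps of $A$ occur arbitrarily far out — otherwise, all gaps beyond some point having bounded length and only finitely many gaps lying before it, all gap lengths of $A$ would be bounded, contradicting that $A$ is expanding. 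So there is a maximal interval $[u,v]$ disjoint from $A$ with $u-1\ge t_{1}$ and $v-u+1>M_{0}$. Then $u-1\in A$ and $u\notin A$, so $u-1\in D$; but the next element of $D$ after $u-1$ would have to lie in $[u,u+M_{0}]\subseteq[u,v]$, contradicting $D\subseteq A$ since $[u,v]\cap A=\emptyset$.

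Granting the claim, I conclude as follows. Since $D$ contains no two consecutive integers, every integer $n$ satisfies $n\in\N\setminus D$ or $n+1\in\N\setminus D$, so $\N\setminus D$ is infinite and any two successive elements of $\N\setminus D$ differ by at most $2$; hence $\nu'_{d}(x)^{\s}=\N\setminus D$ is not expanding. Moreover $D$ is not ultimately periodic, and ultimate periodicity is preserved under complement, so $\nu'_{d}(x)^{\s}$ is not ultimately periodic either, hence not $\fo{\N,+,<}$-definable. The delicate point is the claim above, and within it the observation that unbounded gap lengths of an expanding set force long gaps to appear arbitrarily far out; the surrounding reductions — substituting $\nu_{d}$ into $\delta$, and complementing a definable set — are routine.
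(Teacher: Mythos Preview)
The corollary as printed almost certainly contains a typo: the conclusion should read ``$\nu'_{d}(x)^{\s}$ is expanding,'' not ``is not expanding.'' This is evident from the abstract and introduction (which promise an expanding set) and from the paper's own two-line proof, which simply invokes Theorem~\ref{prop:michaux} and Theorem~\ref{theo:exp}. The paper's intended $\nu'_{d}(x)$ is precisely your intermediate formula $\mu_{d}(x)$: substitute $\nu_{d}$ for $R$ inside $\delta$; then if $R^{\s}$ is not $\fo{\N,+,<}$-definable, $\nu_{d}(x)^{\s}$ is not ultimately periodic (Theorem~\ref{prop:michaux}), whence $\mu_{d}(x)^{\s}$ is expanding (Theorem~\ref{theo:exp}). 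That is the entire argument the paper has in mind.

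You took the printed statement literally, noticed it would be trivial without also requiring non-ultimate-periodicity, and proved that strengthened version. Your argument is correct: the construction of $A=\mu_{d}(x)^{\s}$, the passage to the set $D$ of right endpoints of maximal runs, the verification that $D$ is infinite, contains no two consecutive integers, and is not ultimately periodic (via the observation that an expanding set has arbitrarily long gaps arbitrarily far out), and the final complementation are all sound. So your $\mu_{d}$ already \emph{is} the paper's $\nu'_{d}$, and your additional processing establishes something genuinely stronger than the corrected corollary: a uniformly definable set that is simultaneously not expanding and not $\fo{\N,+,<}$-definable. The paper itself stops at $\mu_{d}$ and claims only ``expanding.''
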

\begin{proof}
  It is straightforward from Proposition \ref{prop:michaux} and
  Theorem \ref{theo:exp}.
\end{proof}

\section{Conclusion}
In this paper, we proved that any set $R$ which is not
$\fo{\N,+,<}$-definable allows to $\fo{\N,+,<,R}$-define an expanding
set of integers, i.e. a set of integers which is not
$\fo{\N,+,<}$-definable.

We see two directions for further research. The first direction
consists in considering the same problem over other domains, such as
the reals, the rationals, or the finite domains.

The second direction consists in considering the same problem for
other vocabularies. In particular, the logic
$\fo{\N,\times,\text{ is a divisor of }}$ on the domain $\N$ is very
similar to $\fo{\N,+,<}$ on a domain of arbitrary dimension. Hence, it
may be possible to $\fo{\N,\times,\text{ is a divisor of }}$-define
some interesting set set of power of 2 using some sets which are not
$\fo{\N,\times,\text{ is a divisor of }}$-definable.

Let $\moda m$ be the set of predicates of the form $x\equiv i\moda m$.
By \cite{milchior-comput}, for each set $R$ which is not
$\fo{<,\moda m}$-definable, there exists a $\fo{<,R}$-formula which
defines a set which is not $\fo{<,\moda m}$-definable. It may be
interesting to apply methods introduced in this paper to this logic,
in order to obtain a formula independent from the interpretation of
$R$.

\bibliographystyle{alpha} \bibliography{../fo}
\renewenvironment{theindex}{\begin{multicols}{2}\begin{itemize}}{\end{itemize}\end{multicols}}
\printindex{}
\end{document}